\numberwithin{equation}{section}
\newtheorem{theorem}{Theorem}[section]
\newtheorem{defi}[theorem]{Definition}
\newtheorem{remark}[theorem]{Remark}
\newtheorem{Cor}[theorem]{Corollary}
\newtheorem{lemma}[theorem]{Lemma}
\newtheorem{rem}[theorem]{Remark}
\def\C{{\mathbb C}}
\def\R2n{{\mathbb R}^{2n}}
\def\C{{\mathbb C}}
\def\R2{{\mathbb R}^2}
\def\R2n{{\mathbb R}^{2n}}
\def\HS{{\mathtt{HS}}}
\def\N{{\mathbb N}}
\begin{document}
\title[Eigenfunction expansions of ultradifferentiable functions and ultradistributions. III. Hilbert spaces and Universality]
{Eigenfunction expansions of ultradifferentiable functions and ultradistributions. III. Hilbert spaces and Universality}

\author[Aparajita Dasgupta]{Aparajita Dasgupta}
\address{
  Aparajita Dasgupta:
   \endgraf
  Department of Mathematics
  \endgraf
  Indian Institute of Technology, Delhi, Hauz Khas
  \endgraf
  New Delhi-110016 
  \endgraf
  India
  \endgraf
  {\it E-mail address} {\rm adasgupta@maths.iitd.ac.in}
  }

\author[Michael Ruzhansky]{Michael Ruzhansky}
\address{
  Michael Ruzhansky:
  \endgraf
  Department of Mathematics: Analysis, Logic and Discrete Mathematics
  \endgraf
  Ghent University, Belgium
  \endgraf
  and
  \endgraf
  School of Mathematical Sciences
    \endgraf
    Queen Mary University of London
  \endgraf
  United Kingdom
  \endgraf
  {\it E-mail address} {\rm ruzhansky@gmail.com}
  }

\thanks{The second
 author was partly supported by the FWO Odysseus Project, by the EPSRC Grant EP/K039407/1, and by 
 the Leverhulme Research Grant RPG-2014-02. %No new data was collected or generated during the course of this research.
 }
\date{\today}

\subjclass{Primary 46F05; Secondary 22E30}
\keywords{Smooth functions; Hilbert spaces; Komatsu classes; sequence spaces; tensor representations; universality.}

\begin{abstract}
In this paper we analyse the structure of the spaces of smooth type functions, generated by elements of arbitrary Hilbert spaces, as a continuation of the research in our papers \cite{DaR2} and \cite{DaR3}. We prove that these spaces are perfect sequence spaces. As a consequence we describe the tensor structure of sequential mappings on the spaces of smooth type functions and characterise their adjoint mappings. As an application we prove the universality of the spaces of smooth type functions on compact manifolds without boundary.
\end{abstract}

\maketitle

\tableofcontents

\section{Introduction}
The present paper is a continuation of our papers \cite{DaR2} and \cite{DaR3}.  In \cite{DaR3}, we analysed the structure of the spaces of coefficients of eigenfunction expansions of functions in Komatsu classes on compact manifolds. We also described the tensor structure of sequential mappings on spaces of Fourier coefficients and characterised their adjoint mappings. In particular, these classes include spaces of analytic and Gevrey functions, as well as spaces of ultradistributions, dual spaces of distributions and
ultradistributions, in both Roumieu and Beurling settings.  In another work, \cite{DaR2}, we have characterised Komatsu spaces of ultradifferentiable functions and ultradistributions on compact
manifolds in terms of the eigenfunction expansions related to positive elliptic operators. Here we note that, using properties of the elliptic operators and the Plancherel formula one can get such type of characterisation of smooth functions in terms of their Fourier coefficients. For example,  if $E$ is a positive elliptic pseudo-differential operator on a compact manifold $X$ without boundary and $\lambda_j$ denotes its eigenvalues in the ascending order, then smooth functions on $X$ can be characterised in terms of their Fourier
coefficients:
\begin{equation}
f\in C^{\infty}(X) \Leftrightarrow \forall N~ \exists C_N : |\widehat{f}(j,l)|\leq C_N \lambda_j^{-N}~\text{for~ all~} j\geq 1, 1\leq l\leq d_j,
\end{equation}
where $\widehat{f}(j,l)=\left(f,e^{l}_j\right)_{L^2}$ with $e^{l}_j$ being the $l^{th}$ eigenfunction corresponding to the eigenvalue $\lambda_j$ (of multiplicity $d_j$). Such characterisations for analytic functions were obtained by Seeley in \cite{see:exp}, with a subsequent extension to Gevrey and, more generally, to Komatsu classes, in \cite{DaR2}.
The  results  obtained in \cite{DaR3} do not include the cases of smooth functions on compact manifolds. We will extend the results in \cite{DaR3} to the spaces of smooth functions. Moreover in this work, we aim at  discussing an abstract analysis of the spaces of smooth type functions generated by basis elements of an arbitrary Hilbert space. Considering an abstract point of view has an advantage that the results will cover the analysis of smooth functions on different spaces like compact Lie groups and manifolds. 
In particular, we introduce a notion of smooth functions generated by elements of a Hilbert space $\mathcal{H}$ forming a basis. We will show that the appearing spaces of
coefficients with respect to expansions in eigenfunctions of positive self-adjoint operators
are perfect spaces in the sense of the theory of sequence spaces (see, e.g., K\"{o}the \cite{Kothe:BK-top-vector-spaces-I}).
Consequently, we obtain tensor representations for linear mappings between spaces
of smooth type functions.
Such discrete representations in a given basis are useful in different areas of time-frequency
analysis, in partial differential equations, and in numerical investigations. 

Using the obtained representations we establish the universality properties of the appearing spaces. In \cite{Wael}, L. Waelbroeck proved the so-called {\em universality} of the space of Schwartz distributions $\mathcal{E}(V)$ with compact support on a $C^{\infty}$-manifold $V,$ with the $\delta$-mapping $\delta: V\rightarrow \widehat{\mathcal{E}(V)},$ that is, any vector valued $C^{\infty}$-mapping $f: V\rightarrow E,$ from $V$ to a sequence space $E$, factors through $\delta: V\rightarrow \widehat{\mathcal{E}(V)}$ by a unique linear morphism $\tilde{f}:\widehat{\mathcal{E}(V)}\rightarrow E$ as $f=\tilde{f}\circ\delta.$ The universality of the spaces of Gevrey functions on the torus has been established in \cite{Tag2}. As an application of our tensor representations, we prove the universality of the spaces of smooth functions on compact manifolds. 

Our analysis is based on the global Fourier analysis on arbitrary Hilbert spaces using techniques similar to compact manifold which was consistently developed in \cite{DR}, with a number of subsequent applications, for example to the spectral properties of operators \cite{Delgado-Ruzhansky:JFA-2014}, or to the wave equations for the Landau Hamiltonian \cite{RT:LMP}. The corresponding version of the Fourier analysis is
based on expansions with respect to orthogonal systems of eigenfunctions of a self-adjoint
operator. The non self-adjoint version has been developed in \cite{RT:IMRN}, with a subsequent extension in \cite{RT:MMNP}.

The paper is organised as follows. In Section \ref{FAonHS} we will briefly recall the constructions leading to the global Fourier analysis on arbitrary Hilbert spaces and define the smooth type function spaces.In Section \ref{SEC:seqspaces} we very briefly recall the relevant definitions from the theory of sequence spaces. In Section \ref{TR1} we present the main results of this paper and their proofs. In Section \ref{Universality} we prove the universality results for the smooth  functions on compact manifold.

 \section{Fourier Analysis on Hilbert Spaces}\label{FAonHS}
  
Let $(\mathcal{H}, ||\cdot||_{\mathcal{H}})$ be a separable Hilbert space and denote by 
$$\mathcal{U}:=\{e_{jl}: e_{jl}\in \mathcal{H}, 1\leq l\leq d_j, d_j\in\mathbb{N}\}_{j\in \mathbb{N}}$$  a collection of elements of $\mathcal{H}$ . 
 We assume that $\mathcal{U}$ is a basis of the space $\mathcal{H}$  with the property $$(e_{jl},e_{mn})_{\mathcal{H}}=\delta_{jm}\delta_{ln}, ~~j,m\in\mathbb{N} ~\text{and}~~1\leq l\leq d_j,~1\leq n\leq d_m,$$ where $\delta_{jm}$ is the Kroneckar delta, equal to 1 for $j=m$, and to zero otherwise.
Also let us fix a sequence of positive numbers  $\Lambda:=\{\lambda_{j}\}_{j\in\mathbb{N}}$  such that  $0<\lambda_{1}\leq \lambda_2 \leq \lambda_3\leq ...,$ and the series \begin{equation}\label{dimineq}\sum\limits_{j={1}}^{\infty}d_j\lambda^{-s_0}_j<\infty \end{equation} converges for some $s_0>0.$  For example, in a compact $C^{\infty}$ manifold $X$ of dimension $n$ without boundary and with a fixed measure we have
$$\sum\limits_{j=1}^{\infty}d_j(1+\lambda_j)^{-q}<\infty ~\textrm{if~and~only~if}~ q>\frac{n}{\nu},$$ where
$0<\lambda_1<\lambda_2<...$ are eigenvalues of a positive elliptic pseudo-differential operator $E$ of an integer order $\nu$, with $H_j\subset L^{2}(X)$ the corresponding  eigenspace and  $$d_j := \dim H_j, ~H_0:= \textrm{ker} E,~\lambda_0:= 0,~d_0:=\dim H_0.$$  
We associate to the pair $\{\mathcal{U},\Lambda\}$ a linear self-adjoint operator $E: \mathcal{H}\rightarrow\mathcal{H}$ such that 
\begin{equation}\label{ellidef}E^{s}f=\sum\limits_{j=1}^{\infty}\sum\limits_{l=1}^{d_j}\lambda_{j}^{s}(f,e_{jl})e_{jl}\end{equation}  for $s\in\mathbb{R}$ and those $f\in\mathcal{H}$ for which the series converges in $\mathcal H.$ Then $E$ is densely defined since
$$E^{s}e_{jl}=\lambda_{j}^{s}e_{jl}, ~~1\leq l\leq d_j, ~j\in\mathbb{N},$$ and
 $\mathcal{U}$ is a basis of $\mathcal{H}$. Also we  write $H_{j}=\text{span}\{e_{jl}\}_{1\leq l\leq d_j},$ and so $\dim H_j= d_j.$  Then we have  $$\mathcal{H}=\bigoplus\limits_{j\in\mathbb{N}}H_{j}.$$ \\
 The Fourier transform for $f\in\mathcal{H}$ is defined as
 $$\widehat{f}(j,l):=(f,e_{jl})_{\mathcal{H}},~~j\in\mathbb{N}, ~1\leq l\leq d_j.$$
 We next define the following notions:\\~\\
 \vspace{.1cm}
 The spaces of smooth type functions are defined by
 $$H^{\infty}_{E}:= \bigcap\limits_{s\in\mathbb{R}}H^{s}_{E},$$
 where
 $$H^{s}_E:=\left\{\phi\in\mathcal{H} : \sum\limits_{j=1}^{\infty}\lambda^{2s}_{j}||\widehat{\phi}(j)||^{2}_{{\HS}}<\infty \right\}.$$ 
 There exists a linear pairing $$(f,g)_{H^{s}_E} = ( E^{s}f, E^{-s}g)_{\mathcal{H}},$$ for $f\in H^{s}_E$ and $g\in H^{-s}_E.$ It is easy to see from this that every continuous linear functional on $H^{s}_{E}$ is of the form $f\rightarrow( f,g)_{H^{s}_E}$ for some $g\in H^{-s}_{E},$ that is  $(H^{s}_{E})^{\prime}=H^{-s}_{E}.$  Then we denote the space of distributions as $\left(H^{\infty}_{E}\right)^{\prime}=\bigcup\limits_{s\in\mathbb{R}}H^{-s}_{E}$.\\
 
  \section{Sequence spaces and sequential linear mappings}
  \label{SEC:seqspaces}
  
  We briefly recall that a sequence space $V$ is a linear subspace of 
$$\mathbb{C}^{\mathbb Z}=\{a=(a_j)|a_j\in\mathbb{C}, j\in \mathbb{Z}\}.$$
The dual $\widehat{V}$ ($\alpha$-dual in the terminology of G. K\"{o}the \cite{Kothe:BK-top-vector-spaces-I}) is a sequence  space defined by
$$\widehat{V}=\{a\in \mathbb{C}^{\mathbb Z}: \sum_{j\in \mathbb{Z}} |f_j||a_j|<\infty
\textrm{ for all }f\in V\}.$$

A sequence space $V$ is called {\em perfect} if $\widehat{\widehat{V}}=V$.
A sequence space is called {\em normal} if $f=(f_j)_{j\in\mathbb{N}}\in V$ implies $|f|=(|f_j|)_{j\in\mathbb{N}}\in V.$
A dual space $\widehat{V}$ is normal so that any perfect space is normal.

A pairing ${\langle\cdot,\cdot\rangle}_{V}$ on $V$ is a bilinear function on $V\times\widehat{V}$ defined by 
$$\langle f,g\rangle_{V}=\sum_{j\in \mathbb{Z}}{f_j g_j}\in\mathbb{C},$$ 
which converges absolutely by the definition of $\widehat{V}.$

\begin{defi}\label{seqdefi} $\phi: V\rightarrow \mathbb{C}$ is called a {\em sequential linear functional} if there exists some $a\in\widehat{V}$ such that $\phi(f)=\langle f,a\rangle_V$ for all $f\in V.$ We abuse the notation by also writing $a: V\rightarrow \mathbb{C}$ for this mapping.\end{defi}

\begin{defi} A mapping $\phi:V\rightarrow W$ between two sequence spaces is called a {\em sequential linear mapping} if 
\begin{enumerate}
\item $\phi$ is algebraically linear,
\item for any $g\in \widehat W,$ the composed mapping $g\circ \phi: V\rightarrow \mathbb{C} $ is in $\widehat{V}.$ 
\end{enumerate}
\end{defi}
 
 \section{Tensor representations and the adjointness}\label{TR1}
 
 In this section we discuss $\alpha$-duals of the spaces, tensor representations for mappings between these spaces and their $\alpha$-duals, and obtain the corresponding 
 adjointness theorem.
 
 \subsection{Duals and $\alpha$-duals}\hfill\\~~~
 
  In this section we  first prove that the  $\alpha$-dual, $\widehat{H^{s}_{E}}$ of the space $H^{s}_{E},$ where
  $$\widehat{H^{s}_{E}}=\left\{v=(v_{j})_{j\in\mathbb{N}}, v_j\in \mathbb{C}^{d_j}; \sum\limits_{j=1}^{\infty}\sum\limits_{l=1}^{d_j}|\widehat{\phi}(j,l)||v_{jl}|<\infty, ~\text{for ~all~}\phi\in H^{s}_{E} \right\},$$ 
coincides with the space $H^{-s}_E.$
 
 \begin{remark}\label{remest1}
 Here we observe that,
 $$w\in\widehat{H^{s}_{E}}\implies \sum\limits_{j=1}^{\infty}\lambda_{j}^{-s-s_0/2}||w_{j}||_{\mathtt{HS}}<\infty.$$\\
  Indeed, let $w\in\widehat{H^{s}_{E}}.$ Define $\widehat{\phi}(j,l)= \lambda_{j}^{-(s+s_0/2)}$ for $j\in\mathbb{N}$ and $1\leq l\leq d_j.$\\ Then $||\widehat{\phi}(j)||_{\mathtt{HS}}=d_{j}^{1/2}\lambda_{j}^{-s-s_0/2}.$ It follows from \eqref{dimineq} and the definition of $H^{s}_E$ that $\phi\in H^{s}_{E}.$\\
  From the definition of the $\alpha$-dual and using the inequality, $$||w_j||_{\mathtt{HS}}\leq ||w_j||_{l^{1}},$$  we can then conclude that,
  \begin{eqnarray}
 \sum\limits_{j=1}^{\infty}\lambda_{j}^{-s-s_0/2}||w_{j}||_{\mathtt{HS}}&\leq& \sum\limits_{j=1}^{\infty}\lambda^{-s-s_0/2}_{j}||w_{j}||_{l^1}\nonumber\\
&=& \sum\limits_{j=1}^{\infty}\sum_{l=1}^{d_j}|\widehat{\phi}(j,l)||w_{jl}|<\infty.
  \end{eqnarray}
  \end{remark}
  Our first result is the identifiction of the topological dual with the $\alpha$-dual.
 \begin{theorem}\vspace{.3cm}
 $\widehat{H^{s}_{E}}=H^{-s}_E.$
 \end{theorem}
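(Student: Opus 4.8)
The plan is to prove the two inclusions $H^{-s}_E\subseteq\widehat{H^s_E}$ and $\widehat{H^s_E}\subseteq H^{-s}_E$ separately, working throughout with $H^s_E$ realised as the weighted $\ell^2$ sequence space $\{v=(v_j)_j:v_j\in\mathbb C^{d_j},\ \|v\|^2_{H^s_E}:=\sum_{j=1}^\infty\lambda_j^{2s}\|v_j\|^2_{\HS}<\infty\}$ via $\phi\mapsto(\widehat\phi(j))_j$; for $s\ge0$ this is genuinely a subspace of $\mathcal H$, since $\lambda_j\ge\lambda_1>0$ forces the coefficient sequence to be $\ell^2$, while for $s<0$ one simply takes this sequence space (the completion of $\mathcal H$ in the weaker norm $\|\cdot\|_{H^s_E}$), after which the $\alpha$-dual is well defined.

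The inclusion $H^{-s}_E\subseteq\widehat{H^s_E}$ is the routine half: given $v\in H^{-s}_E$ and $\phi\in H^s_E$, the Cauchy--Schwarz inequality in $\mathbb C^{d_j}$ gives $\sum_{l=1}^{d_j}|\widehat\phi(j,l)||v_{jl}|\le\|\widehat\phi(j)\|_{\HS}\|v_j\|_{\HS}$, and a second Cauchy--Schwarz for the $j$-sum applied to the splitting $\|\widehat\phi(j)\|_{\HS}\|v_j\|_{\HS}=\bigl(\lambda_j^{s}\|\widehat\phi(j)\|_{\HS}\bigr)\bigl(\lambda_j^{-s}\|v_j\|_{\HS}\bigr)$ yields $\sum_{j,l}|\widehat\phi(j,l)||v_{jl}|\le\|\phi\|_{H^s_E}\|v\|_{H^{-s}_E}<\infty$, so $v\in\widehat{H^s_E}$.

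For the substantive inclusion $\widehat{H^s_E}\subseteq H^{-s}_E$ I would argue by contraposition via a gliding--hump construction. Suppose $w\notin H^{-s}_E$, i.e.\ $\sum_j a_j=\infty$ with $a_j:=\lambda_j^{-2s}\|w_j\|^2_{\HS}$; we may assume $a_1>0$ (otherwise relabel to begin at the first index with $a_j\neq0$), and set $S_n:=\sum_{j=1}^n a_j$, so $S_n\to\infty$. Define $\phi$ by $\widehat\phi(j,l):=S_j^{-1}\lambda_j^{-2s}\,\overline{w_{jl}}$. Then $\|\phi\|^2_{H^s_E}=\sum_j\lambda_j^{2s}S_j^{-2}\lambda_j^{-4s}\|w_j\|^2_{\HS}=\sum_j a_j/S_j^2$, which converges because $a_j/S_j^2\le a_j/(S_{j-1}S_j)=1/S_{j-1}-1/S_j$ for $j\ge2$ telescopes; hence $\phi\in H^s_E$. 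On the other hand $\sum_{j,l}|\widehat\phi(j,l)||w_{jl}|=\sum_j S_j^{-1}\lambda_j^{-2s}\|w_j\|^2_{\HS}=\sum_j a_j/S_j$, and this diverges by the classical Abel--Dini estimate $\sum_{j=m+1}^n a_j/S_j\ge(S_n-S_m)/S_n\to1$ as $n\to\infty$ for each fixed $m$. Thus $\phi$ is a test element witnessing $w\notin\widehat{H^s_E}$, completing the contrapositive; this is consistent with, and refines, the weaker bound in Remark~\ref{remest1}.

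The main obstacle is precisely the choice of the resonant test element $\phi$ in the last step: the weight $S_j^{-1}\lambda_j^{-2s}\overline{w_{jl}}$ is dictated by the requirement that both the norm sum and the pairing sum collapse to $\sum_j a_j/S_j^2$ and $\sum_j a_j/S_j$ respectively, after which the two elementary one-line facts (one series convergent, one divergent) close the argument. A softer alternative for this inclusion is to observe that the truncated maps $\phi\mapsto\sum_{j\le N}\sum_l|\widehat\phi(j,l)||w_{jl}|$ are continuous seminorms on the Hilbert space $H^s_E$, apply the uniform boundedness principle to get $\sum_{j,l}|\widehat\phi(j,l)||w_{jl}|\le C\|\phi\|_{H^s_E}$ whenever $w\in\widehat{H^s_E}$, and then identify the resulting bounded functional with an element of $(H^s_E)'=H^{-s}_E$; I would nonetheless present the explicit gliding--hump version in the text, as it is self-contained and mirrors the computations of Remark~\ref{remest1}.
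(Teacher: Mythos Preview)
Your proof is correct and takes a genuinely different route from the paper for the nontrivial inclusion $\widehat{H^s_E}\subseteq H^{-s}_E$. The paper argues via the topological duality $(H^s_E)'=H^{-s}_E$: given $w\in\widehat{H^s_E}$, it first appeals to Remark~\ref{remest1} (which in turn rests on the summability hypothesis \eqref{dimineq}) to obtain the auxiliary bound $\sum_j\lambda_j^{-s-s_0/2}\|w_j\|_{\HS}<\infty$, and then checks that $w$ gives a continuous linear functional on $H^s_E$. Your Abel--Dini/gliding-hump construction is, by contrast, entirely self-contained and purely sequence-theoretic: it works directly in the weighted-$\ell^2$ picture and uses neither \eqref{dimineq} nor the identification $(H^s_E)'=H^{-s}_E$, so it is both more elementary and marginally more general. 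The uniform-boundedness alternative you sketch at the end is the approach closest in spirit to the paper's continuity argument, though the paper does not phrase it that way; either version is a clean substitute for the paper's proof.
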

  \begin{proof}
   First we will show $H^{-s}_{E}\subseteq \widehat{H^{s}_{E}}. $\\
   Let $u\in H^{-s}_{E}.$\\ Then from the definition we have
   $\sum\limits_{j=1}^{\infty}\lambda_{j}^{-2s}||\widehat{u}(j)||^{2}_{\HS}<\infty.$ 
   We denote for $j\in\mathbb{N},$ $u_{jl}=\widehat{u}(j,l)\in \mathbb{C}^{d_j},$ $l=1,2,...,d_j.$ Using the Cauchy-Schwartz inequality, for any $\phi\in H^{s}_E$ we get
 
  \begin{multline*}
 \sum\limits_{j=1}^{\infty}\sum\limits_{l=1}^{d_j}|\widehat{\phi}(j,l)||u_{jl}| \\
 \leq \left(\sum\limits_{j=1}^{\infty}\sum\limits_{l=1}^{d_j}\lambda^{2s}_{j}|\widehat{\phi}(j,l)|^2\right)^{1/2}\left(\sum\limits_{j=1}^{\infty}\sum\limits_{l=1}^{d_j}\lambda^{-2s}_{j}|u_{jl}|^2\right)^{1/2}<\infty,~\text{(by~ assumption)}.
\end{multline*}
  This implies  that $u \in \widehat H^{s}_E.$  We thus obtain $H^{-s}_{E}\subseteq \widehat{H^{s}_{E}}. $ \\
   Next we will show that $\widehat{H^{s}_{E}}\subseteq H^{-s}_E.$\\
   By duality we know that $(H^{s}_{E})^{\prime}=H^{-s}_{E}.$ So it will be enough if we can prove $\widehat{H^{s}_{E}}\subseteq (H^{s}_{E})^{\prime}.$\\
   
   Let $w\in \widehat{H^{s}_{E}}.$ Then for $\phi \in {H^{s}_{E}}$ we define $$w(\phi)=\sum\limits_{j=1}^{\infty}\lambda_{j}^{-s_0/2}\widehat{\phi}(j)\cdot w_{j}=\sum\limits_{j=1}^{\infty}\lambda_{j}^{-s_0/2}\sum_{l=1}^{d_j}\widehat{\phi}(j,l)w_{jl}.$$
   Then using Remark \ref{remest1} we have
   \begin{eqnarray}|w(\phi)|&\leq & \sum\limits_{j=1}^{\infty}\lambda_{j}^{-s_0/2}||\widehat{\phi}(j)||_{\mathtt{HS}}||w_{j}||_{\mathtt{HS}}\nonumber\\
   &\leq& C\sum\limits_{j=1}^{\infty}\lambda_{j}^{-s_0/2}\lambda_{j}^{-s}||w_{j}||_{\mathtt{HS}}<\infty,\nonumber\\
   \end{eqnarray} since $\phi\in H^{s}_{E}$.  So $w(\phi)$ is well defined.\\
 
  We next check that $w$ is continuous.  Let $\phi_m\rightarrow \phi$ as $m\rightarrow \infty$ in $H^{s}_E.$ This means,
   $||\phi_m-\phi||_{H^{s}_E}\rightarrow 0$ as $m\rightarrow\infty,$ which implies $\lambda_{j}^{s}||\widehat{\phi_m}(j)-\widehat{\phi}(j)||_{\mathtt{HS}}\rightarrow 0$ as $m\rightarrow \infty,$ for $j\in\mathbb{N}$. So we have
   $$||\widehat{\phi_m}(j)-\widehat{\phi}(j)||_{\mathtt{HS}}\leq C_m\lambda^{-s}_{j},$$ where $C_m\rightarrow 0$ as $m\rightarrow\infty.$ Then
   \begin{eqnarray}
   |w(\phi_m-\phi)|&\leq& \sum\limits_{j=1}^{\infty}\lambda_{j}^{-s_0/2}||\widehat{\phi_m}(j)-\widehat{\phi}(j)||_{\mathtt{HS}}||w_j||_{\mathtt{HS}}\nonumber\\
   &\leq& C_{m} \sum\limits_{j=1}^{\infty}\lambda_{j}^{-(s+s_0/2)}||w_j||_{\mathtt{HS}}\rightarrow 0,\nonumber
   \end{eqnarray}
   as $m\rightarrow \infty.$ Hence $w$ is continuous. This gives $w\in (H^{s}_E)^{\prime}=H^{-s}_E.$ So we have
   $\widehat{H^{s}_{E}}\subseteq H^{-s}_{E},$ that implies $\widehat{H^{s}_{E}}= H^{-s}_{E}.$ 
   \end{proof}
 From this we can have the following corollary.
 \begin{Cor}
 $v\in \widehat{H^{s}_E}$ $\iff$ $\sum\limits_{j=1}^{\infty}\lambda_{j}^{-2s}||v_j||^{2}_{\mathtt{HS}}<\infty$  and $v_{j}\in\mathbb{C}^{d_j},$ we denote  $v_{jl}=\widehat{v_{jl}}=(v,e_{jl})_{\mathcal{H}}.$
 \end{Cor}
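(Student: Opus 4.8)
The plan is to read this off directly from the preceding theorem. First I would recall that the theorem identifies $\widehat{H^{s}_{E}}$ with $H^{-s}_{E}$, where the identification between a sequence $v=(v_{j})_{j\in\N}$, $v_{j}\in\C^{d_{j}}$, and an element of $\mathcal{H}$ is given by the Fourier transform $v\mapsto\widehat{v}$, $\widehat{v}(j,l)=(v,e_{jl})_{\mathcal{H}}$. Thus $v\in\widehat{H^{s}_{E}}$ if and only if the corresponding element lies in $H^{-s}_{E}$.

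Next I would simply unwind the definition of the scale $H^{s}_{E}$ with $s$ replaced by $-s$: by definition $u\in H^{-s}_{E}$ precisely when $\sum_{j=1}^{\infty}\lambda_{j}^{-2s}\|\widehat{u}(j)\|_{\HS}^{2}<\infty$. Combining this with the theorem yields exactly the stated equivalence. The only minor point worth recording is the identity $\|\widehat{v}(j)\|_{\HS}=\bigl(\sum_{l=1}^{d_{j}}|v_{jl}|^{2}\bigr)^{1/2}$ for the vector $\widehat{v}(j)=(v_{j1},\dots,v_{jd_{j}})\in\C^{d_{j}}$, which shows that the Hilbert--Schmidt summability condition coincides with the coefficientwise one.

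There is essentially no obstacle here: the whole content sits in the theorem and in the definitions of the spaces $H^{s}_{E}$. Should one prefer an argument independent of the theorem, the implication ``$\Leftarrow$'' is precisely the Cauchy--Schwarz estimate already carried out in the first half of the proof of the theorem, while ``$\Rightarrow$'' follows by testing the $\alpha$-dual condition against the family of sequences $\widehat{\phi}(j,l)=\lambda_{j}^{-(s+s_{0}/2)}$ used in Remark \ref{remest1}, together with a standard closed graph / duality argument. However, deducing the corollary from the theorem is the shortest route, and that is how I would present it.
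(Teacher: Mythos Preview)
Your proposal is correct and matches the paper's approach: the paper simply states that the corollary follows from the preceding theorem $\widehat{H^{s}_{E}}=H^{-s}_{E}$ together with the definition of $H^{-s}_{E}$, which is exactly what you do. Your additional remarks about an independent argument are fine but go beyond what the paper records.
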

 \begin{remark}\label{Realphchar}  From the definition of $H^{\infty}_E$, 
 $$v\in H^{\infty}_E \iff \sum\limits_{j=1}^{\infty}\lambda_{j}^{2s}||v_j||_{\mathtt{HS}}^2<\infty, ~~\forall {s\in\mathbb{R}},$$ where $v_{j}=\widehat{v}(j)\in \mathbb{C}^{d_j}.$ \end{remark}
 We next define the $\alpha$-dual of the space $\widehat{H^{\infty}_E},$
 $$\widehat{H^{\infty}_E}=\left\{v=(v_j), v_j\in\mathbb{C}^{d_j}: \sum\limits_{j=1}^{\infty}\sum\limits_{l=1}^{d_j}|v_{jl}||\widehat{\phi}(j,l)|<\infty, \textrm{~for~all}~\phi\in H^{\infty}_{E}\right\}.$$
 
  Also observe that  $\widehat{H^{\infty}_E}=\widehat{\bigcap\limits_{s\in\mathbb{R}}H^{s}_{E}}=\bigcup\limits_{s\in\mathbb{R}}\widehat{H^{s}_E}=\bigcup\limits_{s\in\mathbb{R}}H^{-s}_{E}.$ From this we can state the following lemma and the proof will follow from our above observation.
 \begin{lemma} $v\in \widehat{H^{\infty}_E}\iff$ for some $s\in\mathbb{R}$ we have $\sum\limits_{j=1}^{\infty}\lambda_{j}^{-2s}||v_j||^2_{\mathtt{HS}}<\infty$.\end{lemma}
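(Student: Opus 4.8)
The plan is to read the lemma off the chain of identities recorded just above it, $\widehat{H^{\infty}_E}=\widehat{\bigcap_{s\in\mathbb{R}}H^{s}_{E}}=\bigcup_{s\in\mathbb{R}}\widehat{H^{s}_E}=\bigcup_{s\in\mathbb{R}}H^{-s}_{E}$, combined with the Corollary, which already characterises membership in $\widehat{H^{s}_{E}}$ by the condition $\sum_{j}\lambda_{j}^{-2s}\|v_j\|_{\mathtt{HS}}^{2}<\infty$. Granting the identity, $v\in\widehat{H^{\infty}_E}$ means precisely that $v\in H^{-s}_{E}$ for some $s\in\mathbb{R}$, and $v\in H^{-s}_{E}=\widehat{H^{s}_{E}}$ is, by the Corollary, exactly $\sum_{j}\lambda_{j}^{-2s}\|v_j\|_{\mathtt{HS}}^{2}<\infty$; this settles both implications simultaneously. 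The substance therefore lies in justifying the identity, which I would split into two inclusions.

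The inclusion $\bigcup_{s}\widehat{H^{s}_E}\subseteq\widehat{H^{\infty}_E}$ is the easy, formal one and gives the ``$\Leftarrow$'' direction: for every $s$ one has $H^{\infty}_E\subseteq H^{s}_E$, hence $\widehat{H^{s}_E}\subseteq\widehat{H^{\infty}_E}$ since the $\alpha$-dual reverses inclusions, and by the Theorem this reads $H^{-s}_E\subseteq\widehat{H^{\infty}_E}$. Concretely, if $\sum_{j}\lambda_{j}^{-2s}\|v_j\|_{\mathtt{HS}}^{2}<\infty$ for some $s$, then $v\in H^{-s}_E=\widehat{H^{s}_E}$ and a single Cauchy--Schwarz estimate, exactly as in the proof of the Theorem, yields $\sum_{j,l}|\widehat{\phi}(j,l)||v_{jl}|<\infty$ for every $\phi\in H^{\infty}_E$, i.e. $v\in\widehat{H^{\infty}_E}$.

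The opposite inclusion $\widehat{H^{\infty}_E}\subseteq\bigcup_{s}H^{-s}_E$ is the ``$\Rightarrow$'' direction and is where I expect the only genuine obstacle: one must rule out ``borderline'' sequences $v$ that fail $\sum_{j}\lambda_{j}^{-2s}\|v_j\|_{\mathtt{HS}}^{2}<\infty$ for every $s$ yet are still absolutely summed against all of $H^{\infty}_E$. I would argue by contraposition with a gliding-hump construction: assuming $\sum_{j}\lambda_{j}^{-2k}\|v_j\|_{\mathtt{HS}}^{2}=\infty$ for every $k\in\mathbb{N}$, choose consecutive blocks of indices $B_{1}<B_{2}<\cdots$ (using that $\lambda_j\to\infty$, which follows from \eqref{dimineq}, so that tails of each divergent series still diverge and $\lambda_j$ is as large as desired on later blocks), and set $\widehat{\phi}(j,l)$ on $B_k$ to be a small, suitably normalised multiple of $\lambda_{j}^{-2k}\overline{v_{jl}}$. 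The blocks and multipliers are selected so that, on the one hand, on $B_k$ with $k>s$ the factor $\lambda_j^{-4k}$ dominates $\lambda_j^{2s}$ and the resulting series $\sum_j\lambda_j^{2s}\|\widehat{\phi}(j)\|_{\mathtt{HS}}^{2}$ converges for every $s$, so $\phi\in H^{\infty}_E$; and, on the other hand, each block contributes at least a fixed positive amount to $\sum_{j,l}|\widehat{\phi}(j,l)||v_{jl}|$, forcing that sum to diverge and contradicting $v\in\widehat{H^{\infty}_E}$. This is precisely the standard fact that the $\alpha$-dual of a countable intersection of normal (K\"othe) sequence spaces is the union of the $\alpha$-duals, cf. K\"othe \cite{Kothe:BK-top-vector-spaces-I}, so one may alternatively invoke it directly instead of writing out the hump.
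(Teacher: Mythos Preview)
Your proposal is correct and follows exactly the route the paper indicates: the paper simply asserts the chain $\widehat{H^{\infty}_E}=\widehat{\bigcap_{s}H^{s}_{E}}=\bigcup_{s}\widehat{H^{s}_E}=\bigcup_{s}H^{-s}_{E}$ as an ``observation'' and says the lemma follows, which is precisely your reduction via the Corollary. You are in fact more careful than the paper, since you isolate the only nontrivial step---the inclusion $\widehat{H^{\infty}_E}\subseteq\bigcup_{s}H^{-s}_E$---and supply a valid gliding-hump justification (or the reference to K\"othe) that the paper omits.
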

 Next we proceed to prove that $H^{\infty}_E$ is a perfect space. But before that let us prove the following lemma.
 \begin{lemma}\label{alphachar}
We have $w\in[\widehat{H^{\infty}_{E}}]^{\wedge}$ if and only if $\sum\limits_{j=1}^{\infty}\lambda_{j}^{2s}||w_j||^{2}_{\mathtt{HS}}<\infty$ for all $s\in\mathbb{R}.$  
 \end{lemma}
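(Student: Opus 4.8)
The statement to prove is Lemma~\ref{alphachar}: $w\in[\widehat{H^{\infty}_{E}}]^{\wedge}$ if and only if $\sum_{j=1}^{\infty}\lambda_{j}^{2s}\|w_j\|^{2}_{\mathtt{HS}}<\infty$ for all $s\in\mathbb{R}$; that is, the $\alpha$-dual of $\widehat{H^{\infty}_E}$ is characterised by the same growth conditions that (by Remark~\ref{Realphchar}) characterise $H^{\infty}_E$ itself. The strategy is a direct double inclusion, using the already-established facts $\widehat{H^{\infty}_E}=\bigcup_{s\in\mathbb{R}}H^{-s}_E$ (equivalently, the preceding lemma's characterisation $v\in\widehat{H^{\infty}_E}\iff \sum_j\lambda_j^{-2s}\|v_j\|^2_{\mathtt{HS}}<\infty$ for \emph{some} $s$) together with the Cauchy--Schwarz argument already used in the proof of Theorem~2.

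\emph{The easy inclusion ($\Leftarrow$).} Suppose $w$ satisfies $\sum_j\lambda_j^{2s}\|w_j\|^2_{\mathtt{HS}}<\infty$ for all $s\in\mathbb{R}$. Take any $v\in\widehat{H^{\infty}_E}$; by the previous lemma there is some $s_1\in\mathbb{R}$ with $\sum_j\lambda_j^{-2s_1}\|v_j\|^2_{\mathtt{HS}}<\infty$. Then, using $\|w_j\|_{\mathtt{HS}}\|v_j\|_{\mathtt{HS}}\geq \sum_{l}|w_{jl}||v_{jl}|$ (Cauchy--Schwarz in $\mathbb{C}^{d_j}$) and then Cauchy--Schwarz over $j$ with the splitting $\lambda_j^{0}=\lambda_j^{s_1}\cdot\lambda_j^{-s_1}$,
\begin{equation*}
\sum_{j=1}^{\infty}\sum_{l=1}^{d_j}|w_{jl}||v_{jl}|\leq \sum_{j=1}^{\infty}\|w_j\|_{\mathtt{HS}}\|v_j\|_{\mathtt{HS}}\leq\left(\sum_{j=1}^{\infty}\lambda_j^{2s_1}\|w_j\|^2_{\mathtt{HS}}\right)^{1/2}\left(\sum_{j=1}^{\infty}\lambda_j^{-2s_1}\|v_j\|^2_{\mathtt{HS}}\right)^{1/2}<\infty,
\end{equation*}
so the pairing converges absolutely for every $v\in\widehat{H^{\infty}_E}$, i.e.\ $w\in[\widehat{H^{\infty}_E}]^{\wedge}$.

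\emph{The harder inclusion ($\Rightarrow$).} Suppose $w\in[\widehat{H^{\infty}_E}]^{\wedge}$; we must show $\sum_j\lambda_j^{2s}\|w_j\|^2_{\mathtt{HS}}<\infty$ for every fixed $s\in\mathbb{R}$. Fix $s$. Since $\widehat{H^{\infty}_E}=\bigcup_{\sigma}H^{-\sigma}_E$, the element $w$ must pair absolutely against every $v\in H^{-\sigma}_E$ for every $\sigma$; in particular against every $v\in H^{-s-s_0/2}_E$, where $s_0$ is the exponent from \eqref{dimineq}. The cleanest route is a contradiction/gliding-hump argument: if $\sum_j\lambda_j^{2s}\|w_j\|^2_{\mathtt{HS}}=\infty$, partition $\mathbb{N}$ into consecutive blocks $B_k$ on which the partial sums $\sum_{j\in B_k}\lambda_j^{2s}\|w_j\|^2_{\mathtt{HS}}\geq 1$, and build $v$ by setting, on block $B_k$, $v_{jl}=c_k\lambda_j^{2s}\overline{w_{jl}}$ with constants $c_k\to 0$ chosen (say $c_k=2^{-k}/(1+\sum_{j\in B_k}d_j\lambda_j^{2s})$, or any summable normalisation involving the extra decay $\lambda_j^{-s_0}$ from \eqref{dimineq}) so that $\sum_j\lambda_j^{-2(s+s_0/2)}\|v_j\|^2_{\mathtt{HS}}<\infty$, hence $v\in\widehat{H^{\infty}_E}$; but $\sum_j\sum_l|v_{jl}||w_{jl}|=\sum_k c_k\sum_{j\in B_k}\lambda_j^{2s}\|w_j\|^2_{\mathtt{HS}}$, which one arranges to diverge, contradicting $w\in[\widehat{H^{\infty}_E}]^{\wedge}$. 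The main obstacle is precisely the bookkeeping in this step: choosing the block constants $c_k$ so that $v$ genuinely lands in some $H^{-\sigma}_E$ (this is where the convergence \eqref{dimineq} of $\sum_j d_j\lambda_j^{-s_0}$ is needed, to absorb the multiplicities $d_j$) while the test pairing still diverges. An essentially equivalent but more streamlined alternative avoids gliding humps: observe $[\widehat{H^{\infty}_E}]^{\wedge}=[\bigcup_\sigma H^{-\sigma}_E]^{\wedge}=\bigcap_\sigma[H^{-\sigma}_E]^{\wedge}=\bigcap_\sigma\widehat{H^{-\sigma}_E}=\bigcap_\sigma H^{\sigma}_E=H^{\infty}_E$, where the third equality is Theorem~2 applied with $-\sigma$ in place of $s$, and then invoke Remark~\ref{Realphchar}; the only thing needing justification here is the interchange $[\bigcup_\sigma V_\sigma]^{\wedge}=\bigcap_\sigma \widehat{V_\sigma}$, which is immediate from the definition of the $\alpha$-dual. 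I would present this second, shorter argument as the proof and relegate the first to a remark, since it reduces everything to Theorem~2 and the earlier lemma with no new estimates.
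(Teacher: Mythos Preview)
Your proposal is correct. The easy direction $(\Leftarrow)$ is identical to the paper's: Cauchy--Schwarz in $\mathbb{C}^{d_j}$ followed by Cauchy--Schwarz over $j$ with the splitting $\lambda_j^{s}\cdot\lambda_j^{-s}$.

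For the hard direction $(\Rightarrow)$ your second argument is genuinely different from, and considerably shorter than, the paper's. The paper fixes $s$, builds an explicit test vector $v_{jl}=\lambda_j^{s}$, checks $v\in H^{-s-s_0/2}_E$ via \eqref{dimineq}, and then uses an algebraic identity for $\sum a_i^2\sum b_l^2$ together with an auxiliary sequence $|u_{ji}|^2=|v_{ji}|^2+C\lambda_j^{2s+s_0}$ to squeeze $\sum_j\lambda_j^{2s}\|w_j\|^2_{\mathtt{HS}}$ between finite quantities. You instead work purely at the level of $\alpha$-duals: from $\bigcup_\sigma H^{-\sigma}_E\subseteq\widehat{H^{\infty}_E}$ (the trivial Cauchy--Schwarz inclusion, which is all you actually need) you get $[\widehat{H^{\infty}_E}]^{\wedge}\subseteq\bigcap_\sigma\widehat{H^{-\sigma}_E}=\bigcap_\sigma H^{\sigma}_E$, the last equality being Theorem~4.2 with $-\sigma$ in place of $s$; then the definition of $H^{\sigma}_E$ gives the claimed estimate. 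The interchange $[\bigcup_\sigma V_\sigma]^{\wedge}=\bigcap_\sigma\widehat{V_\sigma}$ is indeed immediate from the definition, as you note. A pleasant by-product is that your argument delivers Theorem~\ref{P:perfect} (perfectness of $H^{\infty}_E$) essentially for free, whereas in the paper Theorem~\ref{P:perfect} is deduced \emph{from} Lemma~\ref{alphachar}; so you have effectively reversed the logical order and eliminated the explicit estimates. What the paper's approach buys, by contrast, is self-containment: it never needs the identification $\widehat{H^{s}_E}=H^{-s}_E$ and works directly with the growth conditions. Your gliding-hump sketch is plausible but, as you say, the bookkeeping is delicate; presenting the $\alpha$-dual chain as the proof is the right call.
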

 \begin{proof}
 Let $w\in [\widehat{H^{\infty}_{E}}]^{\wedge}.$ Let $s\in\mathbb{R},$ we define
 $$v_{jl}=\widehat{v}(j,l)=\lambda_{j}^{s},~~1\leq l\leq d_j.$$
 Then $||v_{j}||^{2}_{\mathtt{HS}}=d_j\lambda_{j}^{2s}$ and so $v\in H^{-s-\frac{s_0}{2}}_{E}$  because in view of \eqref{dimineq} we have 
 \begin{equation}\sum_{j=1}^{\infty}\lambda_j^{-2s}\lambda_j^{-s_0}||v_{j}||^{2}_{\mathtt{HS}}=\sum_{j=1}^{\infty}d_j\lambda_j^{-2s}\lambda_j^{-s_0}\lambda_{j}^{2s}=\sum_{j=1}^{\infty}d_j\lambda_j^{-s_0}<\infty.\end{equation}
 Then $v\in \widehat{H^{\infty}_E}=\bigcup\limits_{s\in\mathbb{R}}H^{-s}_{E}$ which gives $\sum\limits_{j=1}^{\infty}\sum\limits_{l=1}^{d_j}|w_{jl}||v_{jl}|<\infty.$ 
Now we observe first that
 \begin{eqnarray}\label{reqineq}
 \sum\limits_{j=1}^{\infty}\lambda_{j}^{2s}||w_j||^{2}_{\mathtt{HS}}&\leq& \sum\limits_{j=1}^{\infty}d_j\lambda_{j}^{2s}||w_j||^{2}_{\mathtt{HS}}\nonumber\\
 &=& \sum\limits_{j=1}^{\infty}||v_j||_{\mathtt{HS}}^2||w_j||^{2}_{\mathtt{HS}},
\end{eqnarray}
since $d_j\geq 1.$

We want to show that $\sum\limits_{j=1}^{\infty}||v_j||_{\mathtt{HS}}^2||w_j||^{2}_{\mathtt{HS}}<\infty.$
 To prove this we will use the following identity:
 $$\sum_{i=1}^n a_i^2\sum_{l=1}^n b_{l}^2=\sum_{i=1}^n a_i^2 b_{i}^2+\sum_{i=1}^n a_{i}^2\left(\sum_{l=1}^n b_{l}^2-b_{i}^2\right).$$ From this we get
 $$\sum_{i=1}^{d_j}|w_{ji}|^2\sum_{l=1}^{d_j}|v_{jl}|^2=\sum_{i=1}^{d_j}|v_{ji}|^2|w_{ji}|^2+\sum_{i=1}^{d_j}|w_{ji}|^2\left(\sum_{l=1}^{d_j}|v_{jl}|^2-|v_{ji}|^2\right).$$
 
 We consider the second term of the above inequality, that is,
 $$\sum_{i=1}^{d_j}|w_{ji}|^2\left(\sum_{l=1}^{d_j}|v_{jl}|^2-|v_{ji}|^2\right)\leq \sum_{i=1}^{d_j}|w_{ji}|^2\sum_{l=1}^{d_j}|v_{jl}|^2 \leq\sum_{i=1}^{d_j}|w_{ji}|^2\left(C\lambda_{j}^{2s+s_0}\right),$$ since  $v\in H^{-s-s_0/2}_{E}.$  Then  we get
 \begin{eqnarray}\label{ineqest1}
 \sum_{j=1}^\infty||v_j||_{\mathtt{HS}}^2||w_j||^{2}_{\mathtt{HS}}&\leq& \sum_{j=1}^{\infty}\sum_{i=1}^{d_j} |w_{ji}|^2\left(|{v}_{ji}|^2+C\lambda_{j}^{2s+s_0}\right)\nonumber\\
 &\leq &\sum_{j=1}^{\infty}\sum_{i=1}^{d_j} |w_{ji}|^2|u_{ji}|^2\nonumber\\
 &\leq& C \left(\sum_{j=1}^{\infty}\sum_{i=1}^{d_j} |w_{ji}||u_{ji}|\right)^{2},
 \end{eqnarray}
 where $|u_{ji}|^2=|{v}_{ji}|^2+C\lambda_{j}^{2s+s_0}.$ 
 Now
 \begin{eqnarray}\sum\limits_{j=1}^{\infty}\lambda_j^{-2s-2s_0}||u_j||_{\mathtt{HS}}^2&=&\sum\limits_{j=1}^{\infty}\lambda_j^{-2s-2s_0}||v_j||_{\mathtt{HS}}^2+C\sum\limits_{j=1}^{\infty}d_j\lambda_j^{-2s-2s_0}\lambda_{j}^{2s+s_0}\nonumber\\
 &\leq& C^{\prime}\sum\limits_{j=1}^{\infty}\lambda_j^{-s_0}+ C\sum\limits_{j=1}^{\infty} d_j\lambda_j^{-s_0}<\infty,\end{eqnarray}
 which implies that $u\in H^{-s-s_0}_{E},$ that is $u\in \widehat{H^{\infty}_{E}}.$ This gives that 
 \begin{equation}\label{newes1}
 \sum_{j=1}^{\infty}\sum_{i=1}^{d_j} |w_{ji}||u_{ji}|<\infty,
 \end{equation}
 as $w\in [\widehat{H^{\infty}_{E}}]^{\wedge}$ and $u\in \widehat{H^{\infty}_{E}}. $
 So we have from \eqref{reqineq}, \eqref{ineqest1} and \eqref{newes1} that
 $$ \sum\limits_{j=1}^{\infty}\lambda_{j}^{2s}||w_j||^{2}_{\mathtt{HS}}<\infty.$$
 
 Next we proceed to prove the opposite direction. Let $$w\in \Sigma=\left\{v=(v_{j})_{j\in\mathbb{N}}, ~v_{j}\in\mathbb{C}^{d_j}\right\}$$ be such that $\sum\limits_{j=1}^{\infty}\lambda_{j}^{2s}||w_j||^{2}_{\mathtt{HS}}<\infty$ for all $s\in\mathbb{R}.$ Let $v\in \widehat{H^{\infty}_E}=\bigcup\limits_{s\in\mathbb{R}} H^{-s}_{E}.$ In particular, we have $v\in H^{-s}_{E}$ for some $s\in\mathbb{R}.$ We have to show 
 $$\sum\limits_{j=1}^{\infty}\sum_{l=1}^{d_j}|w_{jl}| |v_{jl}|<\infty.$$ 
 By the Cauchy-Schwartz inequality we have
 \begin{eqnarray}
\sum_{j=1}^{\infty}\sum_{l=1}^{d_j}|w_{jl}||v_{jl}|&=& \sum_{j=1}^{\infty} \sum_{l=1}^{d_j}\lambda^{s}_j|w_{jl}|\lambda^{-s}_j |v_{jl}|\nonumber\\
&\leq& \left(\sum_{j=1}^{\infty} \sum_{l=1}^{d_j}\lambda^{2s}_j|w_{jl}|^2\right)^{1/2}\left(\sum_{j=1}^{\infty} \sum_{l=1}^{d_j}\lambda^{-2s}_j|v_{jl}|^2\right)^{1/2}<\infty.
 \end{eqnarray}
 It follows that $w\in  [\widehat{H^{\infty}_{E}}]^{\wedge},$ completing the proof.\end{proof}~\\~
 Now using Remark \ref{Realphchar} and Lemma \ref{alphachar} we can prove that the spaces $H^{\infty}_{E}$ are perfect spaces.

 \begin{theorem}\label{P:perfect}
 $H^{\infty}_{E}$ is a perfect space.
 \end{theorem}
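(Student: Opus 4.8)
The plan is to read the result off directly from the two characterisations already in hand, namely Remark \ref{Realphchar} and Lemma \ref{alphachar}. Recall that $H^{\infty}_E$ is a priori a subspace of $\mathcal H$; via the Fourier transform $\phi\mapsto\widehat\phi=(\widehat\phi(j))_{j\in\mathbb N}$ with $\widehat\phi(j)\in\mathbb C^{d_j}$, it is identified with a block sequence space inside $\Sigma=\{v=(v_j)_{j\in\mathbb N}:v_j\in\mathbb C^{d_j}\}$, and this identification is a linear bijection because $\mathcal U$ is an orthonormal basis of $\mathcal H$ and membership in $H^{\infty}_E$ only constrains the decay of the coefficients. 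All the $\alpha$-dual operations of Section \ref{SEC:seqspaces} are then to be understood on $\Sigma$ with the pairing $\langle v,w\rangle=\sum_{j}\sum_{l}v_{jl}w_{jl}$.

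First I would invoke Remark \ref{Realphchar}: a sequence $v\in\Sigma$ lies in $H^{\infty}_E$ if and only if $\sum_{j=1}^{\infty}\lambda_j^{2s}\|v_j\|_{\HS}^2<\infty$ for every $s\in\mathbb R$. Next I would invoke Lemma \ref{alphachar}: a sequence $w\in\Sigma$ lies in $[\widehat{H^{\infty}_E}]^{\wedge}=\widehat{\widehat{H^{\infty}_E}}$ if and only if $\sum_{j=1}^{\infty}\lambda_j^{2s}\|w_j\|_{\HS}^2<\infty$ for every $s\in\mathbb R$. These two conditions are literally the same, so $H^{\infty}_E$ and $\widehat{\widehat{H^{\infty}_E}}$ consist of exactly the same sequences, i.e.\ $\widehat{\widehat{H^{\infty}_E}}=H^{\infty}_E$, which is precisely the definition of a perfect space. (As noted in Section \ref{SEC:seqspaces}, perfectness then also yields normality of $H^{\infty}_E$ for free.)

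There is essentially no obstacle remaining: the substantive work — establishing both inclusions $[\widehat{H^{\infty}_E}]^{\wedge}\subseteq H^{\infty}_E$ and $H^{\infty}_E\subseteq[\widehat{H^{\infty}_E}]^{\wedge}$, which required the combinatorial splitting identity and the auxiliary sequence $u$ — was already carried out in the proof of Lemma \ref{alphachar}. The only points deserving a line of care are bookkeeping ones: that the $\alpha$-dual is taken inside the block space $\Sigma$ rather than inside $\mathbb C^{\mathbb N}$, and that the Fourier transform identifies $H^{\infty}_E$ with the corresponding sequence space so that the abstract notion of perfectness from Section \ref{SEC:seqspaces} transfers verbatim. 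Once these identifications are in place, the theorem is immediate from Remark \ref{Realphchar} and Lemma \ref{alphachar}.
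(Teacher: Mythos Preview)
Your proposal is correct and follows essentially the same approach as the paper: both deduce perfectness directly from the coincidence of the characterisations in Remark \ref{Realphchar} and Lemma \ref{alphachar}. The only difference is cosmetic: the paper makes the inverse Fourier step explicit by writing $\phi=\sum_{j}\sum_{l}w_{jl}e_{jl}$ and checking $\phi\in\mathcal H$, whereas you absorb this into the phrase ``the Fourier transform identifies $H^{\infty}_E$ with the corresponding sequence space''.
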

\begin{proof}
From the definition we always have $H^{\infty}_{E}\subseteq [\widehat{H^{\infty}_{E}}]^{\wedge}.$ We will prove the other direction.
Let  $w\in  [\widehat{H^{\infty}_{E}}]^{\wedge},$ $w=(w_j)_{j\in\mathbb{N}}$ and $w_j\in\mathbb{C}^{d_j}.$\\
Define
$$\phi=\sum_{j=1}^{\infty}\sum_{l=1}^{d_j}w_{jl}e_{jl}.$$ The series is convergent and  $\phi\in\mathcal{H}$ since
\begin{eqnarray}
||\phi||_{\mathcal{H}}&=&\left|\left|\sum_{j=1}^{\infty}\sum_{l=1}^{d_j}w_{jl}e_{jl}\right|\right|_{\mathcal{H}}\nonumber\\
&\leq& \left(\sum_{j=1}^{\infty}\sum_{l=1}^{d_j}\lambda_{j}^{s_0}|w_{jl}|^2\right)^{1/2}
\left(\sum_{j=1}^{\infty}\sum_{l=1}^{d_j}\lambda_{j}^{-s_0}||e_{jl}||_{\mathcal{H}}^2\right)^{1/2}\nonumber\\
&=& \left(\sum_{j=1}^{\infty}\sum_{l=1}^{d_j}\lambda_{j}^{s_0}|w_{jl}|^2\right)^{1/2}\left(\sum_{j=1}^{\infty}d_j\lambda_{j}^{-s_0}\right)^{1/2}
<\infty,
\end{eqnarray}$  $
since $w\in  [\widehat{H^{\infty}_{E}}]^{\wedge}$ and using \eqref{dimineq} and Lemma \ref{alphachar}.

 Also from the property $(e_{jl},e_{mn})_{\mathcal{H}}=\delta_{jm}\delta_{ln},$ for $j,m\in\mathbb{N}$ and $1\leq l\leq d_j,$ $1\leq n\leq d_m,$ it is obvious that $\widehat{\phi}(j,l)=(\phi,e_{jl})_{\mathcal{H}}=w_{jl}.$ This gives $||\widehat{\phi}(j)||_{\mathtt{HS}}=||w_j||_{\mathtt{HS}}.$ So by Lemma \ref{alphachar},
\begin{eqnarray}
w\in [\widehat{H^{\infty}_{E}}]^{\wedge}&\implies&\sum\limits_{j=1}^{\infty}\lambda_{j}^{2s}||w_j||^{2}_{\mathtt{HS}}<\infty, ~\text{for~all~}s\in\mathbb{R}\nonumber\\
&\implies&\sum\limits_{j=1}^{\infty}\lambda_{j}^{2s}||\widehat{\phi}(j)||^{2}_{\mathtt{HS}}<\infty,\nonumber
\end{eqnarray}
and so from Remark \ref{Realphchar} we have $w\in H^{\infty}_E$  which implies that $ [\widehat{H^{\infty}_{E}}]^{\wedge}\subseteq H^{\infty}_E$ holds.
\end{proof}

\subsection{Adjointness}\label{TR2}\hfill\\~~~

   Before proving the adjointness theorem we first prove the following lemma,
\begin{lemma} Let $v\in H^{\infty}_E$ and $w\in \widehat{H^{\infty}_E}$. Then we have
\begin{equation}\label{tensoreq}
\sum_{j=1}^{\infty}||v_j||_{\mathtt{HS}}||w_j||_{\mathtt{HS}}<\infty, \end{equation} where $v_j=(v_{jl})_{1\leq l\leq d_j}=\widehat{v}(j,l),$  and $j\in\mathbb{N},$ and the same for $w.$ Moreover, suppose that for all $v\in H^{\infty}_{E},$ \eqref{tensoreq} holds. Then we must have $w\in\widehat{H^{\infty}_E}.$ Also, if for all $w\in\widehat{H^{\infty}_E},$ \eqref{tensoreq} holds, we have $v\in H^{\infty}_E.$
\end{lemma}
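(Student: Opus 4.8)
The plan is to deduce all three assertions from a single elementary device: the \emph{blockwise} Cauchy--Schwarz inequality. Within each index block $\mathbb{C}^{d_j}$ one has, for vectors $a_j=(a_{jl})_{1\le l\le d_j}$ and $b_j=(b_{jl})_{1\le l\le d_j}$,
\[
\sum_{l=1}^{d_j}|a_{jl}|\,|b_{jl}|\le\|a_j\|_{\mathtt{HS}}\,\|b_j\|_{\mathtt{HS}},
\]
which is merely the usual Cauchy--Schwarz inequality in $\mathbb{C}^{d_j}$ and, crucially, costs no factor $d_j$. Summing over $j$ yields
\[
\sum_{j=1}^{\infty}\sum_{l=1}^{d_j}|a_{jl}|\,|b_{jl}|\le\sum_{j=1}^{\infty}\|a_j\|_{\mathtt{HS}}\,\|b_j\|_{\mathtt{HS}}.
\]
Read in both directions, this inequality, together with the coefficient descriptions already established --- Remark \ref{Realphchar} for $H^{\infty}_E$, the identity $\widehat{H^{\infty}_E}=\bigcup_{s\in\mathbb{R}}H^{-s}_E$, and Lemma \ref{alphachar} together with the perfectness Theorem \ref{P:perfect} for $[\widehat{H^{\infty}_E}]^{\wedge}=H^{\infty}_E$ --- is all the argument needs. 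I identify throughout an element $\phi$ with its Fourier coefficient sequence $\widehat{\phi}=(\widehat{\phi}(j))_j$, which is legitimate because $\mathcal{U}$ is an orthonormal basis.

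For the first assertion, let $v\in H^{\infty}_E$ and $w\in\widehat{H^{\infty}_E}$. Then $w\in H^{-s}_E$ for some $s\in\mathbb{R}$, i.e.\ $\sum_{j}\lambda_j^{-2s}\|w_j\|_{\mathtt{HS}}^2<\infty$, whereas Remark \ref{Realphchar} gives $\sum_{j}\lambda_j^{2s}\|v_j\|_{\mathtt{HS}}^2<\infty$ for that same $s$ (indeed for every $s$). I would then apply Cauchy--Schwarz in $j$ against the weights $\lambda_j^{s}$ and $\lambda_j^{-s}$:
\[
\sum_{j=1}^{\infty}\|v_j\|_{\mathtt{HS}}\|w_j\|_{\mathtt{HS}}=\sum_{j=1}^{\infty}\bigl(\lambda_j^{s}\|v_j\|_{\mathtt{HS}}\bigr)\bigl(\lambda_j^{-s}\|w_j\|_{\mathtt{HS}}\bigr)\le\Bigl(\sum_{j}\lambda_j^{2s}\|v_j\|_{\mathtt{HS}}^2\Bigr)^{1/2}\Bigl(\sum_{j}\lambda_j^{-2s}\|w_j\|_{\mathtt{HS}}^2\Bigr)^{1/2}<\infty,
\]
which is exactly \eqref{tensoreq}.

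For the second assertion, suppose \eqref{tensoreq} holds for every $v\in H^{\infty}_E$. Given any $\phi\in H^{\infty}_E$, take $v=\phi$ in \eqref{tensoreq} and apply the blockwise inequality above:
\[
\sum_{j=1}^{\infty}\sum_{l=1}^{d_j}|\widehat{\phi}(j,l)|\,|w_{jl}|\le\sum_{j=1}^{\infty}\|\widehat{\phi}(j)\|_{\mathtt{HS}}\,\|w_j\|_{\mathtt{HS}}<\infty.
\]
Since $\phi$ was arbitrary, this is precisely the defining condition for $w\in\widehat{H^{\infty}_E}$. The third assertion is the mirror image: if \eqref{tensoreq} holds for every $w\in\widehat{H^{\infty}_E}$, then by the same blockwise inequality $\sum_{j=1}^{\infty}\sum_{l=1}^{d_j}|v_{jl}|\,|w_{jl}|\le\sum_{j=1}^{\infty}\|v_j\|_{\mathtt{HS}}\|w_j\|_{\mathtt{HS}}<\infty$ for every $w\in\widehat{H^{\infty}_E}$, that is $v\in[\widehat{H^{\infty}_E}]^{\wedge}$; by Theorem \ref{P:perfect} this space equals $H^{\infty}_E$, so $v\in H^{\infty}_E$.

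No real obstacle appears. The one step meriting attention is the choice of exponent $s$ in the first assertion: it must be the $s$ provided by $w\in\widehat{H^{\infty}_E}$, after which membership of $v$ in $H^{\infty}_E$ automatically furnishes the matching summability $\sum_j\lambda_j^{2s}\|v_j\|_{\mathtt{HS}}^2<\infty$ for precisely that exponent (and, being $H^{\infty}$, for all exponents). It is also worth remarking that the last two assertions are simply the exact converses of the blockwise inequality read at the level of sequence spaces, so they are in effect restatements, for the blockwise Hilbert--Schmidt pairing $\sum_j\|\cdot\|_{\mathtt{HS}}\|\cdot\|_{\mathtt{HS}}$, of the definition of $\widehat{H^{\infty}_E}$ as the $\alpha$-dual of $H^{\infty}_E$ and of the perfectness identity $[\widehat{H^{\infty}_E}]^{\wedge}=H^{\infty}_E$.
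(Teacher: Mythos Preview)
Your proof is correct. For the second and third assertions you use the same blockwise Cauchy--Schwarz step as the paper (with the explicit appeal to perfectness for the third), so there is nothing to add there.

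The genuine difference is in the first assertion. The paper does \emph{not} use the weighted Cauchy--Schwarz estimate in $j$; instead it starts from the $\alpha$-dual relation $\sum_{j,l}|v_{jl}||w_{jl}|<\infty$, bounds $\|v_j\|_{\mathtt{HS}}\|w_j\|_{\mathtt{HS}}\le\|v_j\|_{\ell^1}\|w_j\|_{\ell^1}$, expands the product via the algebraic identity $\sum_i|a_i|\sum_l|b_l|=\sum_i|a_i||b_i|+\sum_i|a_i|(\sum_l|b_l|-|b_i|)$, controls the cross term by $Cd_j\lambda_j^{-s}$, and then absorbs everything into a new test sequence $u$ with $|u_{ji}|=|v_{ji}|+Cd_j\lambda_j^{-s}$ which it verifies still lies in $H^{\infty}_E$ (choosing $2s=2t+3s_0$ and using \eqref{dimineq}). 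Your route bypasses all of this: once one uses the coefficient characterisations $v\in H^{s}_E$ and $w\in H^{-s}_E$ (already available from the preceding results), a single application of Cauchy--Schwarz with weights $\lambda_j^{\pm s}$ finishes the job. The paper's argument has the minor conceptual advantage of working purely at the level of the $\alpha$-dual pairing without ever naming the parameter $s$, but your approach is substantially shorter and avoids the artificial construction of $u$ and the accompanying estimates on $d_j$.
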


\begin{proof}
Let us  assume first that $$\sum_{j=1}^{\infty}||v_j||_{\mathtt{HS}}||w_j||_{\mathtt{HS}}<\infty,$$ for  $v\in H^{\infty}_E$ or $w\in \widehat{H^{\infty}_E}$.
 We observe that
 $$\sum_{l=1}^{d_j}|v_{jl}||w_{jl}|\leq ||v_{j}||_{\mathtt{HS}}||w_j||_{\mathtt{HS}}.$$
 And so we have
 $$\sum_{j=1}^{\infty}\sum_{l=1}^{d_j}|{v}_{jl}||w_{jl}|\leq \sum_{j=1}^{\infty}||v_j||_{\mathtt{HS}}||w_j||_{\mathtt{HS}}<\infty.$$
 
 Now we prove the other direction. 
  Here we will use the following inequality,
 $$\sum_{i=1}^n |a_i|\sum_{l=1}^n| b_{l}|=\sum_{i=1}^n |a_i| |b_{i}|+\sum_{i=1}^n |a_{i}|\left(\sum_{l=1}^n |b_{l}|-|b_{i}|\right),$$ for any $a_i,b_i\in\mathbb{R},$ yielding 
 \begin{eqnarray}||v_j||_{\mathtt{HS}}||w_j||_{\mathtt{HS}}&\leq&\sum_{i=1}^{d_j}|w_{ji}|\sum_{l=1}^{d_j}|v_{jl}|\nonumber\\
 &=&\sum_{i=1}^{d_j}|v_{ji}||w_{ji}|+\sum_{i=1}^{d_{j}}|w_{ji}|\left(\sum_{l=1}^{d_{j}}|v_{jl}|-|v_{ji}|\right).\end{eqnarray}
 We consider the second term of the above inequality, that is,
 $$\sum_{i=1}^{d_{j}}|w_{ji}|\left(\sum_{l=1}^{d_{j}}|v_{jl}|-|v_{ji}|\right)\leq \sum_{i=1}^{d_{j}}|w_{ji}|\left(Cd_j\lambda_{j}^{-s}\right),~~\forall s\in\mathbb{R},$$ since $v\in  H^{\infty}_E$ and so $v\in H^{s}_{E}, \forall s\in\mathbb{R}.$  Then  we get
 \begin{eqnarray}\sum_{j=1}^\infty||v_j||_{\mathtt{HS}}||w_j||_{\mathtt{HS}}&\leq& \sum_{j=1}^{\infty}\sum_{i=1}^{d_{j}} |w_{ji}|\left(|v_{ji}|+Cd_j\lambda_{j}^{-s}\right).
\end{eqnarray}
Let $|u_{ji}|=|v_{ji}|+Cd_j\lambda_{j}^{-s},$ for $i=1,2,...,d_j.$ Then 
\begin{eqnarray}\label{defnu}
\sum\limits_{i=1}^{d_j}|u_{ji}|^2&=&\sum\limits_{i=1}^{d_j}|v_{ji}|^2+\sum\limits_{i=1}^{d_j}C^2d_j^2\lambda_{j}^{-2s}+\sum\limits_{i=1}^{d_j}2Cd_j\lambda_{j}^{-s}|v_{ji}|\nonumber\\
&=&||v_j||_{\mathtt{HS}}^2+ C^2d_j^3\lambda_{j}^{-2s}+2Cd_j^2\lambda_{j}^{-2s}.
\end{eqnarray}
Then for any $t>0$ we have, using \eqref{defnu}, that
\begin{eqnarray}
\sum_{j=1}^{\infty}\lambda_{j}^{2t}||u_j||^{2}_{\mathtt{HS}}&=&\sum_{j=1}^{\infty}\lambda_{j}^{2t}||v_j||^{2}_{\mathtt{HS}}+C^2\sum_{j=1}^{\infty}\lambda_{j}^{2t}d_j^3\lambda_{j}^{-2s}+2C\sum_{j=1}^{\infty}\lambda_{j}^{2t}d_j^2\lambda_{j}^{-2s}\nonumber\\
&\leq& \sum_{j=1}^{\infty}\lambda_{j}^{2t}||v_j||^{2}_{\mathtt{HS}}+C^2\sum_{j=1}^{\infty}d_j\lambda_{j}^{2t+2s_0}\lambda_{j}^{-2s}\nonumber\\
&&\quad+2C\sum_{j=1}^{\infty}\lambda_{j}^{2t+2s_0}\lambda_{j}^{-2s}.
\end{eqnarray}

Now since $v\in H^{\infty}_E,$ in particular we can have $2s=2t+3s_0,$ for any $t>0,$ which gives, using \eqref{dimineq}
$$\sum_{j=1}^{\infty}\lambda_{j}^{2t}||u_j||^{2}_{\mathtt{HS}}\leq \sum_{j=1}^{\infty}\lambda_{j}^{2t}||v_j||^{2}_{\mathtt{HS}}+C^2 \sum_{j=1}^{\infty}d_j\lambda_{j}^{-s_0}+2C\sum_{j=1}^{\infty}\lambda_{j}^{-s_0}<\infty.$$
So $u\in H^{\infty}_E.$ Then we have for $w\in \widehat{H^{\infty}_E},$
$$\sum_{j=1}^\infty||v_j||_{\mathtt{HS}}||w_j||_{\mathtt{HS}}\leq \sum_{j=1}^{\infty}\sum_{i=1}^{d_{j}} |w_{ji}||u_{ji}|<\infty,$$ completing the proof.
\end{proof}

We next prove the adjointness theorem, also recalling Definition \ref{seqdefi}.
Let $\mathcal{H}, \mathcal{G}$ be two Hilbert spaces and $E$ and $F$ be the operators  defined by \eqref{ellidef} corresponding to the bases $\{e_{j}\}_{j\in\mathbb{N}},$ $\{h_{k}\}_{k\in\mathbb{N}},$ respectively, where  $d_{j}=\dim X_j$ and $g_k=\dim Y_k,$ and $X_j=\text{span}\{e_{jl}\}_{l=1}^{d_j},$ $Y_{k}=\text{span}\{h_{ki}\}_{i=1}^{g_k}.$ Also $\mathcal{H}=\bigoplus\limits_{j\in\mathbb{N}}X_j$ and $\mathcal{G}=\bigoplus\limits_{k\in\mathbb{N}}Y_k.$  We denote the corresponding  spaces to the operators $E$ and $F$ in the Hilbert space $\mathcal{H}$  and $\mathcal{G}$ respectively by $H^{\infty}_E$ and $G^{\infty}_{F}.$
\begin{theorem}{\label{THM:adj}}
A linear mapping $f:H^{\infty}_E\rightarrow G^{\infty}_{F}$ is sequential  if and only if  $f$ is represented by an infinite tensor $(f_{kjli}), $ ~ $k,j\in \mathbb{N},$ $1\leq l\leq d_{j}$ and $1\leq i\leq g_k$ such that for any $u\in H^{\infty}_E$ and $v\in\widehat{G^{\infty}_{F}}$ we have
\begin{equation} \label{EQ:f1}
\sum_{j=1}^{\infty}\sum_{l=1}^{d_j}|f_{kjli}||\widehat{u}(j,l)|<\infty, ~~\textrm{for ~all}~k\in\mathbb{N}, ~i=1,2,...,g_k, 
\end{equation} and 
\begin{equation}\label{EQ:f2}
\sum_{k=1}^{\infty}\sum_{i=1}^{g_k}\left|(v_k)_i\right|\left|{\left(\sum_{j=1}^{\infty}f_{kj}\widehat{u}(j)\right)_{i}}\right|<\infty.
\end{equation}
Furthermore, the adjoint mapping $\widehat{f}:\widehat{G^{\infty}_F}\rightarrow \widehat{H^{\infty}_E}$ defined by the formula $\widehat{f}(v)=v\circ f$ is also sequential, and the transposed  matrix ${(f_{kj})}^{t}$ represents $\widehat{f}$, with $f$ and $\widehat f$ related by $\langle f(u),v\rangle_{G^{\infty}_{F}}=\langle u,\widehat f (v)\rangle_{H^{\infty}_{E}}.$
\end{theorem}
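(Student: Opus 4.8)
The plan is to prove the two implications of the characterisation and then to read off the statements about $\widehat f$ from the sequence-space duality.

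\textbf{From a sequential $f$ to the tensor.} Assume $f\colon H^\infty_E\to G^\infty_F$ is sequential. For each $(k,i)$ I would use the coordinate sequence $\delta_{ki}$ (equal to $1$ in slot $(k,i)$ and zero elsewhere), which lies in $\widehat{G^\infty_F}$ trivially; by the definition of a sequential mapping, $\delta_{ki}\circ f\in\widehat{H^\infty_E}$, and I set $f_{kjli}:=(\delta_{ki}\circ f)_{jl}$ (equivalently $f_{kjli}=\widehat{f(e_{jl})}(k,i)$, using $e_{jl}\in H^\infty_E$). Since an element of $\widehat{H^\infty_E}$ acts on $H^\infty_E$ through the pairing, this yields $f(u)_{ki}=(\delta_{ki}\circ f)(u)=\sum_{j,l}\widehat u(j,l)f_{kjli}=\bigl(\sum_j f_{kj}\widehat u(j)\bigr)_i$ for all $u$, the required tensor representation, while $\delta_{ki}\circ f\in\widehat{H^\infty_E}$ is precisely \eqref{EQ:f1}. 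Condition \eqref{EQ:f2} then follows because $f$ maps into $G^\infty_F$, so for every $v\in\widehat{G^\infty_F}$ the pairing $\langle f(u),v\rangle_{G^\infty_F}=\sum_{k,i}v_{ki}f(u)_{ki}$ converges absolutely.

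\textbf{From the tensor to a sequential $f$.} Conversely, given $(f_{kjli})$ satisfying \eqref{EQ:f1}--\eqref{EQ:f2}, I would define $f(u)_{ki}:=\sum_{j,l}f_{kjli}\widehat u(j,l)$. By \eqref{EQ:f1} these series converge absolutely, so $f(u)$ is a well-defined scalar sequence depending linearly on $u$; and \eqref{EQ:f1}, which says that each $(f_{kjli})_{j,l}$ lies in the $\alpha$-dual $\widehat{H^\infty_E}=(H^\infty_E)'$, shows that every coordinate $u\mapsto f(u)_{ki}$ is continuous on $H^\infty_E$. By \eqref{EQ:f2}, $f(u)$ pairs absolutely with all of $\widehat{G^\infty_F}$, so $f(u)\in\widehat{\widehat{G^\infty_F}}=G^\infty_F$ by Theorem \ref{P:perfect} (whose proof applies verbatim to $G^\infty_F$); hence $f\colon H^\infty_E\to G^\infty_F$ is linear. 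The main point is to show $f$ is sequential, and for this I would invoke the closed graph theorem: $H^\infty_E$ and $G^\infty_F$ are Fr\'echet spaces, and if $u_n\to u$ in $H^\infty_E$ and $f(u_n)\to w$ in $G^\infty_F$ then for each $(k,i)$ one has $f(u_n)_{ki}\to f(u)_{ki}$ by continuity of the coordinate functional and $f(u_n)_{ki}\to w_{ki}$ by convergence in $G^\infty_F$, so $w=f(u)$; thus $f$ has closed graph, hence is continuous, and therefore $g\circ f\in(H^\infty_E)'=\widehat{H^\infty_E}$ for every $g\in\widehat{G^\infty_F}$, i.e. $f$ is sequential.

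\textbf{The adjoint.} The remaining assertions are then formal. Since $f$ is sequential, $\widehat f(v):=v\circ f\in\widehat{H^\infty_E}$ for every $v\in\widehat{G^\infty_F}$ and $\widehat f$ is linear; its coordinates are $(\widehat f(v))_{jl}=(v\circ f)(e_{jl})=\langle f(e_{jl}),v\rangle_{G^\infty_F}=\sum_{k,i}f_{kjli}v_{ki}$, so $\widehat f$ is represented by the block-transposed tensor $(f_{kj})^{t}$. The adjointness identity is automatic because $v\circ f\in\widehat{H^\infty_E}$ acts on $H^\infty_E$ through the pairing: $\langle f(u),v\rangle_{G^\infty_F}=(v\circ f)(u)=\langle u,\widehat f(v)\rangle_{H^\infty_E}$. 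Finally, to see $\widehat f$ is sequential I would check that for $h\in\widehat{\widehat{H^\infty_E}}=H^\infty_E$ one has $(h\circ\widehat f)(v)=\langle h,v\circ f\rangle_{H^\infty_E}=(v\circ f)(h)=\langle f(h),v\rangle_{G^\infty_F}=\langle v,f(h)\rangle$, with $f(h)\in G^\infty_F=\widehat{\widehat{G^\infty_F}}$, whence $h\circ\widehat f\in\widehat{\widehat{G^\infty_F}}$.

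\textbf{The main obstacle.} Almost everything here is formal manipulation with $\alpha$-duals; the one substantive step is the sufficiency direction, where the two scalar estimates \eqref{EQ:f1}--\eqref{EQ:f2} must be upgraded to the conclusion that $f$ is a \emph{sequential} mapping. The delicate point one wants to avoid is a hand justification of the interchange of summation in $\sum_{k,i}g_{ki}\sum_{j,l}f_{kjli}\widehat u(j,l)$; my route around it combines perfectness (Theorem \ref{P:perfect}, for both spaces), the identification of the topological dual with the $\alpha$-dual, and the closed graph theorem. Alternatively, staying inside the sequence-space formalism, that interchange can be obtained by the same normality-plus-splitting estimates used in the proof of Lemma \ref{alphachar}.
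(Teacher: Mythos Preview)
Your proof is correct. The necessity direction (sequential $\Rightarrow$ tensor) is essentially the paper's argument verbatim: both use the coordinate sequences $\delta_{ki}\in\widehat{G^\infty_F}$ to extract $f_{kjli}$, and both derive \eqref{EQ:f2} simply from $f(u)\in G^\infty_F$.

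The sufficiency direction, however, is genuinely different. The paper does not invoke any topological machinery: it first proves Lemma~\ref{L:L1} (a truncation/limit interchange statement relying on perfectness and normality of $H^\infty_E$), and then uses an $\epsilon$-phase trick together with that lemma to upgrade \eqref{EQ:f1}--\eqref{EQ:f2} to the \emph{transposed} absolute estimates
\[
\sum_{k,i}|(v_k)_i||f_{kjli}|<\infty
\quad\text{and}\quad
\sum_{j,l}\Bigl|\sum_{k,i}(v_k)_i f_{kjli}\Bigr|\,|\widehat u(j,l)|<\infty,
\]
the second of which is exactly $v\circ f\in\widehat{H^\infty_E}$. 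You instead observe that $H^\infty_E$ and $G^\infty_F$ are Fr\'echet, that each coordinate $u\mapsto f(u)_{ki}$ is continuous by \eqref{EQ:f1} and the identification $\widehat{H^\infty_E}=(H^\infty_E)'$, and then apply the closed graph theorem; sequentiality follows from continuity together with $\widehat{G^\infty_F}=(G^\infty_F)'$. This is shorter and more conceptual, but it imports the Fr\'echet structure and the equality of $\alpha$-dual with topological dual as black boxes, whereas the paper's argument stays entirely inside the sequence-space formalism and, as a by-product, explicitly produces the tensor conditions \eqref{EQ:f1}--\eqref{EQ:f2} for the transposed tensor representing $\widehat f$. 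Your purely dual computation that $\widehat f$ is sequential (via $(h\circ\widehat f)(v)=(v\circ f)(h)=\langle f(h),v\rangle$ and perfectness) is cleaner than the paper's treatment, which leaves this step somewhat implicit.
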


Let us summarise the ranges for indices in the used notation as well as give more explanation to \eqref{EQ:f2}. 
For $f: H^{\infty}_E\rightarrow G^{\infty}_F$ and $u\in H^{\infty}_E$ we write
\begin{equation}\label{EQ:notf}
\mathbb C^{g_k}\ni f(u)_k=\sum_{j=1}^{\infty}f_{kj}\widehat{u}(j)=
\sum_{j=1}^\infty \sum_{l=1}^{d_j} f_{kjl}\widehat{u}(j,l),\quad k\in\mathbb{N},
\end{equation} 
so that
\begin{equation}\label{EQ:notf2}
f_{kjl}\in \mathbb C^{g_k},\;
f_{kjli}\in\mathbb C,\quad k,j\in \N ,\; 1\leq l\leq d_j,\; 1\leq i \leq g_k,
\end{equation} 
and 
\begin{equation}\label{EQ:notf3}
\mathbb C\ni (f(u)_k)_i=f(u)_{ki} = \sum_{j=1}^\infty\sum_{l=1}^{d_j} f_{kjli}\widehat{u}(j,l),\quad k\in\N,\; 1\leq i \leq g_k,
\end{equation} 
where we view $f_{kj}$ as a matrix, $f_{kj}\in\mathbb{C}^{g_k\times d_j}$, and the product of the matrices has been explained in \eqref{EQ:notf}.

\begin{rem}
Let us now describe how the tensor $(f_{kjli})$, $k,j\in \mathbb{N},$ $1\leq l\leq d_j$,  $1\leq i\leq g_k$, is constructed given a sequential mapping $f: H^{\infty}_E\rightarrow G^{\infty}_F$.
For every $k\in \mathbb{N}$ and $1\leq i\leq g_k$, define the family
$v^{ki}=\left(v^{ki}_{j}\right)_{j\in\mathbb{N}}$ such that each $v^{ki}_{j}\in \C^{d_j}$ is defined by
\begin{equation}\label{EQ:defv}
 v^{ki}_{j}(l)=\begin{cases}
               1,~~~~~j=k, l=i,\\
               0,~~~~~ \textrm{otherwise}.
            \end{cases}
\end{equation} 
 Then  $v^{ki}\in \widehat{G^{\infty}_F}$, and since
$f$ is sequential we have  $v^{ki}\circ f\in\widehat{H^{\infty}_E}$, and we can write $v^{ki}\circ f=\left(v^{ki}\circ f\right)_{j\in\N},$ where $(v^{ki}\circ f)_{j}\in\C^{d_j}.$  
Then for each $1\leq l \leq d_j$ we set
\begin{equation}\label{EQ:deff}
f_{kjli}:=(v^{ki}\circ f)_{j}(l),
\end{equation} 
the $l^{th}$ component of the vector $(v^{ki}\circ f)_{j}\in\C^{d_j}.$
The formula \eqref{EQ:deff} will be shown in the proof of Theorem {\ref{THM:adj}}.
In particular, since for $\phi\in H^{\infty}_{E}$ we have $f(\phi)\in G^{\infty}_F,$
it will be a consequence of \eqref{EQ: 4.27} and \eqref{EQ: 4.28} later on that
\begin{equation}
\label{EQ:deff2}
v^{ki}\circ f(\phi)=(\widehat{f(\phi)})(k,i)=\sum_{j=1}^{\infty}\sum_{l=1}^{d_j}f_{kjli}\widehat{\phi}(j,l),
\end{equation}
so that the tensor $(f_{kjli})$ is describing the transformation of the Fourier coefficients of $\phi$ into those of $f(\phi)$.
\end{rem}

To prove Theorem {\ref{THM:adj}} we first establish the following lemma.

\begin{lemma} \label{L:L1} 
Let  $f:H^{\infty}_E\rightarrow G^{\infty}_{F}$ be a linear mapping represented by an infinite tensor $(f_{kjli})_{k,j\in{\N}, 1\leq l\leq d_j, 1\leq i\leq g_k}$ satisfying \eqref{EQ:f1} and  \eqref{EQ:f2}. Then for all $u\in H^{\infty}_E$ and $v\in \widehat{G^{\infty}_F},$ we have
$$\lim_{n\rightarrow\infty}\sum_{k=1}^{\infty}\sum_{i=1}^{g_k}\left|(v_k)_i\right|\left|{\left(\sum_{1\leq j\leq n}f_{kj}\widehat{u}(j)\right)_{i}}\right|=\sum_{k=1}^{\infty}\sum_{i=1}^{g_k}\left|(v_k)_i\right|\left|{\left(\sum_{j=1}^{\infty}f_{kj}\widehat{u}(j)\right)_{i}}\right|.$$
\end{lemma}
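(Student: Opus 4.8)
The plan is to recognise the partial sums $\sum_{1\le j\le n}f_{kj}\widehat u(j)$ as $f$ evaluated on truncations of $u$, to upgrade $f$ to a \emph{continuous} map between the Fr\'echet spaces $H^{\infty}_E$ and $G^{\infty}_F$, and then to conclude by a Cauchy--Schwarz estimate against $v$. Concretely, for $n\in\N$ put $u^{(n)}:=\sum_{j=1}^{n}\sum_{l=1}^{d_j}\widehat u(j,l)\,e_{jl}$. This is a finite linear combination, so $u^{(n)}\in H^{\infty}_E$, and since $\widehat{u^{(n)}}(j)=\widehat u(j)$ for $j\le n$ and $\widehat{u^{(n)}}(j)=0$ for $j>n$, the fact that $f$ is represented by the tensor $(f_{kjli})$ gives $f(u^{(n)})_k=\sum_{1\le j\le n}f_{kj}\widehat u(j)$ for every $k$. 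Moreover $\|u-u^{(n)}\|_{H^{s}_E}^{2}=\sum_{j>n}\lambda_j^{2s}\|\widehat u(j)\|_{\mathtt{HS}}^{2}\to0$ as $n\to\infty$ for every $s$, since $u\in H^{s}_E$; hence $u^{(n)}\to u$ in $H^{\infty}_E$. So the left-hand side of the asserted identity is exactly $\lim_{n\to\infty}\sum_{k,i}|(v_k)_i|\,|f(u^{(n)})_{ki}|$.

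The key step is to show that $f\colon H^{\infty}_E\to G^{\infty}_F$ is continuous. Both spaces are Fr\'echet (countable projective limits of the Hilbert spaces $H^{s}_E$, resp.\ $G^{s}_F$), so by the closed graph theorem it suffices to prove that the graph of $f$ is closed. Let $\phi_m\to\phi$ in $H^{\infty}_E$ and $f(\phi_m)\to\psi$ in $G^{\infty}_F$; then in particular $f(\phi_m)_{ki}\to\psi_{ki}$ for every $k,i$. On the other hand, inequality \eqref{EQ:f1}, being valid for all $u\in H^{\infty}_E$, says precisely that $(f_{kjli})_{j,l}\in\widehat{H^{\infty}_E}=\bigcup_{s}H^{-s}_E$, hence $(f_{kjli})_{j,l}\in H^{-\sigma}_E=(H^{\sigma}_E)'$ for some $\sigma=\sigma(k,i)$; therefore the functional $\phi\mapsto f(\phi)_{ki}=\sum_{j,l}f_{kjli}\widehat\phi(j,l)$ is continuous on $H^{\sigma}_E$, and a fortiori on $H^{\infty}_E$, so $f(\phi_m)_{ki}\to f(\phi)_{ki}$. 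Comparing, $\psi_{ki}=f(\phi)_{ki}$ for all $k,i$, i.e.\ $\psi=f(\phi)$. Thus $f$ is continuous, and consequently $f(u^{(n)})\to f(u)$ in $G^{\infty}_F$.

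Finally, since $v\in\widehat{G^{\infty}_F}=\bigcup_{s}G^{-s}_F$, fix $s_0$ with $v\in G^{-s_0}_F$. For every $a\in G^{s_0}_F$, Cauchy--Schwarz applied first in $i$ and then in $k$ yields
\[
\sum_{k=1}^{\infty}\sum_{i=1}^{g_k}|(v_k)_i|\,|a_{ki}|\ \le\ \sum_{k=1}^{\infty}\|v_k\|_{\mathtt{HS}}\,\|a_k\|_{\mathtt{HS}}\ \le\ \|v\|_{G^{-s_0}_F}\,\|a\|_{G^{s_0}_F}.
\]
Applying this to $a=f(u^{(n)})-f(u)=f(u^{(n)}-u)$, and using $\big|\,|f(u^{(n)})_{ki}|-|f(u)_{ki}|\,\big|\le|f(u^{(n)}-u)_{ki}|$ together with the absolute convergence of both series (guaranteed by \eqref{EQ:f2}), we get
\[
\Big|\sum_{k,i}|(v_k)_i|\,|f(u^{(n)})_{ki}|-\sum_{k,i}|(v_k)_i|\,|f(u)_{ki}|\Big|\ \le\ \|v\|_{G^{-s_0}_F}\,\|f(u^{(n)}-u)\|_{G^{s_0}_F},
\]
which tends to $0$ as $n\to\infty$ because $u^{(n)}-u\to0$ in $H^{\infty}_E$ and $f$ is continuous. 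In view of the first paragraph and of $f(u)_k=\sum_{j=1}^{\infty}f_{kj}\widehat u(j)$, this is the claim.

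The main obstacle is the middle step: the hypotheses grant only that $f$ is algebraically linear and tensor-represented with \eqref{EQ:f1}--\eqref{EQ:f2}, so its continuity has to be extracted, and the closed graph theorem fed by \eqref{EQ:f1} seems the most economical route. A direct dominated-convergence argument with the $n$-independent majorant $|(v_k)_i|\sum_{j,l}|f_{kjli}|\,|\widehat u(j,l)|$ is tempting, but its summability over $(k,i)$ is not immediate from the bare hypotheses and in fact reduces to essentially the same continuity input, so that detour gains nothing.
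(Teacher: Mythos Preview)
Your proof is correct but takes a genuinely different route from the paper's. The paper works entirely inside the sequence-space framework: it interchanges the sums in $\langle f(u),v\rangle$ via \eqref{EQ:f1}--\eqref{EQ:f2} to write it as $\langle u,v\circ f\rangle$, argues that $v\circ f\in\widehat{H^\infty_E}$ (invoking the perfectness of $H^\infty_E$, Theorem~\ref{P:perfect}), and then uses the elementary weak convergence $\langle u-u^{(n)},w\rangle_{H^\infty_E}\to0$ for $w\in\widehat{H^\infty_E}$ to pass to the limit; the absolute values are inserted at the very end by appealing to normality (if $u\in H^\infty_E$ then $|u|\in H^\infty_E$, and likewise $|v|\in\widehat{G^\infty_F}$). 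You instead equip $H^\infty_E$ and $G^\infty_F$ with their Fr\'echet topologies, upgrade $f$ to a continuous map via the closed graph theorem (using \eqref{EQ:f1} to see that each coordinate functional $\phi\mapsto f(\phi)_{ki}$ lies in some $(H^{\sigma}_E)'$), and then conclude by a direct Cauchy--Schwarz estimate against $v\in G^{-s_0}_F$. Your approach handles the absolute values more transparently---the reverse triangle inequality plus the norm bound does it in one stroke---at the price of importing the closed graph theorem, which lies outside the $\alpha$-dual machinery the paper is building. The paper's route, by contrast, is self-contained within that machinery and makes immediate use of the perfectness theorem just established, though its final normality step is rather compressed.
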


\begin{proof}[Proof of Lemma \mbox{\ref{L:L1}}]

 Let $u\in H^{\infty}_E$ and $u\approx \left(\widehat u(j)\right)_{j\in {\mathbb{N}}}.$ Define $u^{n}:=\left(\widehat u ^{(n)}(j)\right)_{j\in {\mathbb{N}}}$  by setting  
\begin{equation}
\widehat{u}^{(n)}(j)=\begin{cases}
\widehat u(j), \; j\leq n, \nonumber\\
 0, \quad\;\, j>n.\end{cases}\nonumber
\end{equation} 
 Then for any  $w\in \widehat{H^{\infty}_E},$ we get  $\langle u-u^{n}, w\rangle_{H^{\infty}_E}\rightarrow 0$ as $n\rightarrow \infty.$ This is true since $\sum_{j=1}^{\infty}\left|\widehat{u}(j)\cdot w_j \right|<\infty$ so that  
 $$\left| \langle u-u^{n}, w\rangle_{H^{\infty}_E}\right|\leq \sum_{j\geq n}\left|\widehat{u_j}\cdot w_{j}\right|\rightarrow {0}$$ as $n\rightarrow \infty.$ 
  Now for any $u\in H^{\infty}_E$  and $v\in \widehat{G^{\infty}_F} $ and from \eqref{EQ:f1} and \eqref{EQ:f2} we have 
   \begin{multline}\label{EQ:long}
  \langle f(u), v \rangle_{G^{\infty}_{F}}=\sum_{k=1}^{\infty} \left(f(u)\right)_{k}\cdot v_{k}
=\sum_{k=1}^{\infty}\left(\sum_{j=1}^{\infty}f_{kj}\widehat{u}(j)\right) \cdot v_{k}
\\
=\sum_{k=1}^{\infty}\sum_{j=1}^\infty\sum_{\ell=1}^{d_j}\sum_{i=1}^{g_k} f_{kj\ell i}\widehat{u}(j,\ell)(v_k)_{i}
=\sum_{j=1}^{\infty}\sum_{\ell=1}^{d_j}\widehat{u}(j,\ell) \sum_{k=1}^{\infty}\sum_{i=1}^{g_k}
f_{kj\ell i}(v_k)_i
\\
=\sum_{j=1}^{\infty}\sum_{\ell=1}^{d_j}\widehat{u}(j,\ell) \sum_{k=1}^{\infty}
f_{kj\ell }\cdot v_k
=\sum_{j=1}^\infty\widehat{u}(j)\cdot (v\circ f)_{j}=\langle u, v\circ f\rangle_{H^{\infty}_E},
  \end{multline}
  where  
 $$\mathbb C^{d_j}\ni (v\circ f)_{j}=\left\{\sum_{k=1}^{\infty}
f_{kj\ell }\cdot v_k\right\}_{\ell=1}^{d_j},\quad j\in \mathbb{N},$$ and 
$$v\circ f=\left\{(v\circ f)_{j}\right\}_{j=1}^{\infty}.$$

Now we have the mapping $f: H^{\infty}_E\rightarrow G^{\infty}_F$ and $v\in \widehat{G^{\infty}_F},$ so we have $v\circ f: {H^{\infty}_E} \rightarrow \Sigma,$ where $\Sigma=\left\{v=(v_j)_{j\in\mathbb{N}}, v_j\in \mathbb{C}^{d_j}\right\}.$ For any 
$u\in H^{\infty}_E$ and $v\in \widehat{G^{\infty}_F}$ we have
$$\langle f(u), v\rangle_{G^{\infty}_{F}}=\sum_{j=1}^{\infty}\sum_{\ell=1}^{d_j}\sum_{k=1}^{\infty}\sum_{i=1}^{g_k}
f_{kj\ell i}\widehat{u}(j,\ell) (v_k)_i=\langle u, v\circ f\rangle_{H^{\infty}_E}.$$ 
Then from \eqref{EQ:f1} and \eqref{EQ:f2}  we have $\left\langle u,(v\circ f)\right\rangle<\infty,$ and since $H^{\infty}_E$ is perfect, we have for any $u\approx(\widehat{u}(j))_{j\in\N}\in H^{\infty}_E$  that the series  $\sum\limits_{j=1}^{\infty}\left|(v\circ f)_{j}\cdot\widehat{u}(j)\right|$ is convergent. So then  $v\circ f\in \widehat{H^{\infty}_E}.$  
Then we have
$$\langle f(u)-f(u^{n}), v\rangle_{G^{\infty}_{F}}=\langle u-u^{n}, v\circ f\rangle_{H^{\infty}_E}\rightarrow 0$$ as $n\rightarrow \infty.$ Therefore,
$$\langle f(u), v\rangle_{G^{\infty}_{F}}=\lim_{n\rightarrow\infty}\langle f(u^{n}), v\rangle_{G^{\infty}_{F}},$$ for all $u\in H^{\infty}_E$ and $v\in\widehat{G^{\infty}_F}.$
Hence for any $u\in H^{\infty}_E$ and $v\in \widehat{G^{\infty}_F}$ we have
$$\lim_{n\rightarrow\infty}\sum_{k=1}^{\infty}v_k\cdot\left(\sum_{1\leq j\leq n}f_{kj}\widehat{u}(j)\right) =\sum_{k=1}^{\infty}v_k\cdot\left(\sum_{j=1}^{\infty}f_{kj}\widehat{u}(j)\right),$$
that is,
$$\lim_{n\rightarrow\infty}\sum_{k=1}^{\infty}\sum_{i=1}^{g_k}(v_k)_i\left(\sum_{1\leq j\leq n}f_{kj}\widehat{u}(j)\right)_i=\sum_{k=1}^{\infty}\sum_{i=1}^{g_k}(v_k)_{i}\left(\sum_{j=1}^{\infty}f_{kj}\widehat{u}(j)\right)_i.$$ Now we will use the fact that if $u\in H^{\infty}_E$ then $|u|\in H^{\infty}_E$ where $|u|=\left(\widehat{ |u}|_j\right)_{j\in\mathbb{N}},$ $\widehat{ |u|}_{j}\in \mathbb{R}^{d_j},$  with
\begin{align}
    \widehat {|u|}_{j} &:= \begin{bmatrix}
          |\widehat{ u}(j,1)| \\
          |\widehat{ u}(j,2)|  \\
           \vdots \\
          |\widehat {u}(j,d_j)|
         \end{bmatrix},\nonumber
  \end{align} in view of  Theorem \ref{P:perfect}. The 
 same is true for the dual space $\left[G^{\infty}_F\right]^{\wedge}.$ 
 So then this argument gives
$$\lim_{n\rightarrow\infty}\sum_{k=1}^{\infty}\sum_{i=1}^{g_k}\left|(v_k)_i\right|\left|{\left(\sum_{1\leq j\leq n}f_{kj}\widehat{u}(j)\right)_{i}}\right|=\sum_{k=1}^{\infty}\sum_{i=1}^{g_k}\left|(v_k)_i\right|\left|{\left(\sum_{j=1}^{\infty}f_{kj}\widehat{u}(j)\right)_{i}}\right|.$$ The proof is complete.
\end{proof}
\begin{remark} This proof does not require sequentiality and it can be used to improve the argument  in \cite[Theorem 4.7]{DaR3}.
\end{remark}

\begin{proof} [Proof of Theorem \mbox{\ref{THM:adj}}]  
 Let us assume first that the mapping $f: H^{\infty}_E\rightarrow G^{\infty}_F$ can be represented by $f=(f_{kjli})_{k,j\in\mathbb{N},
 1\leq l\leq d_j, 1\leq i\leq g_k},$ an infinite tensor such that\begin{equation}\sum_{j=1}^{\infty}\sum_{l=1}^{d_j}|f_{kjli}||\widehat{u}(j,l)|<\infty, ~~\textrm{for ~all}~k\in\mathbb{N},~ i=1,2,\ldots,g_k,\end{equation}  and
 \begin{equation}\sum_{k=1}^{\infty}\sum_{i=1}^{g_k}\left|(v_k)_i\right|\left|{\left(\sum_{j=1}^{\infty}f_{kj}\widehat{u}(j)\right)_{i}}\right|<\infty\end{equation} hold for all $u\in H^{\infty}_E$ and $v\in\widehat{G^{\infty}_F}.$
 
 Let $\widehat{u}_{1}=\left(\widehat{u_1}(p)\right)_{p\in{\mathbb{N}}}$ be such that for some $j,l$ where $j\in{\mathbb{N}},$ $1\leq l\leq d_j$,  we have 
 \begin{equation} \widehat{u_1}(p,q)=\begin{cases}
 {1},~~~~ p=j, \; q=l,\\
 0, ~~~~~~\textrm{otherwise}.\end{cases}\nonumber\end{equation}
 
 Then $u_{1}\in H^{\infty}_E$  so $fu_1=f(u_{1})\in G^{\infty}_F$ and
 \begin{eqnarray}\label{EQ:4.21}
 \left(fu_1\right)_{k}&=&\sum_{p=1}^{\infty}f_{kp}\widehat{u}_{1}(p)\nonumber\\
 &=&\sum_{p=1}^{\infty}\sum_{q=1}^{d_p}f_{kpq}\widehat{u_{1}}(p,q)\nonumber\\
 &=& \sum_{q=1}^{d_j}f_{kjq}\widehat{u_1}(j,q)\nonumber\\
 &=&f_{kjl}\in \C^{g_k}.
 \end{eqnarray}

 We now first show that
 $$\widehat{\left(fu\right)}(k)=\sum_{j=1}^{\infty}\sum_{l=1}^{d_j}f_{kjl}\widehat{u}(j,l),$$ where $f_{kjli}\in \mathbb{C}$ for each $k,j\in\mathbb{N},$ $1\leq l \leq d_j$ and $1\leq i\leq g_k.$
The way in which $f$ has been defined we have
 $$(fu)_{k}= \sum_{j=1}^{\infty}\sum_{l=1}^{d_j}f_{kjl}\widehat{u}(j,l),\quad f_{kjl}\in \C^{g_k}.$$
Also since $u\in H^{\infty}_E$, from our assumption we have $fu\in G^{\infty}_F$  and $fu\approx \left(\widehat{(fu)}(j)\right)_{j\in\mathbb{N}}$, so that $(fu)_{k}\approx\widehat{(fu)}(k).$

We  can then write $\widehat{(fu)}(k)=\sum\limits_{j=1}^{\infty}\sum\limits_{l=1}^{d_j}f_{kjl}\widehat{u}(j,l).$
Since we know that $v\in{\left[G^{\infty}_F\right]^{\wedge}}$ and $fu\in G^{\infty}_F,$ we have
$$\sum_{k=1}^{\infty}\sum_{i=1}^{g_k}|(v_k)_i||(\widehat{(fu)}(k))_i|=\sum_{k=1}^{\infty}\sum_{i=1}^{g_k}|(v_k)_i||\sum_{j=1}^{\infty}\sum_{l=1}^{d_j} f_{kjli}\widehat{u}(j,l)|<\infty.$$
 In particular  using the definition of $u_1$ and \eqref{EQ:4.21} we get
\begin{eqnarray}\label{EQ:4.22}\sum_{k=1}^{\infty}\sum_{i=1}^{g_k}|(v_k)_i|\left|\sum_{p=1}^{\infty} \sum_{q=1}^{d_p}f_{kpqi}\widehat{u_1}(p,q)\right|=\sum_{k=1}^{\infty}\sum_{i=1}^{g_k}|(v_k)_i||f_{kjli}|<\infty,\end{eqnarray}
for any $j\in \mathbb{N}$ and $1\leq l\leq d_j.$

Now for any $u\in H^{\infty}_E$ consider 
$$J=\sum_{j=1}^{\infty}\sum_{l=1}^{d_j}|\sum_{k=1}^{\infty}\sum_{i=1}^{g_k}(v_{k})_{i}f_{kjli}| |\widehat{u}(j,l)|.$$
Then we consider the series 
$$I_{n}:=\sum_{1\leq j\leq n}\sum_{l=1}^{d_j}|\sum_{k=1}^{\infty}\sum_{i=1}^{g_k}(v_{k})_{i}f_{kjli}| |\widehat{u}(j,l)|,$$
so that we have
\begin{eqnarray}
I_{n}&=&\sum_{1\leq j\leq n}\sum_{l=1}^{d_j}|\sum_{k=1}^{\infty}\sum_{i=1}^{g_k}(v_{k})_{i}f_{kjli}| |\widehat{u}(j,l)|\nonumber\\
&=&\sum_{1\leq j\leq n}\sum_{l=1}^{d_j}|\sum_{k=1}^{\infty}\sum_{i=1}^{g_k}(v_{k})_{i}f_{kjli}\widehat{u}(j,l)|.\nonumber
\end{eqnarray}
Let $\epsilon=(\epsilon_i)_{1\leq i\leq d_{k}},$ $k\in{\mathbb{N}}$, be  such that $\epsilon_i\in\mathbb{C}$ and $|\epsilon_i|\leq C, $ for  all $i$ and  such that
$$|\sum_{k=1}^{\infty}\sum_{i=1}^{g_k}(v_{k})_{i}f_{kjli}\widehat{u}(j,l)|=\sum_{k=1}^{\infty}\sum_{i=1}^{g_k}(v_{k})_{i}f_{kjli}\widehat{u}(j,l)\epsilon_{i}.$$
Then 
\begin{eqnarray}
I_n &=&\sum_{1\leq j\leq n}\sum_{l=1}^{d_j}\sum_{k=1}^{\infty}\sum_{i=1}^{g_k}(v_{k})_{i}f_{kjli}\widehat{u}(j,l)\epsilon_{i}\nonumber\\
&\leq & C\sum_{k=1}^{\infty}\sum_{i=1}^{g_k}|(v_{k})_{i}|\left|\sum_{1\leq j\leq n}\sum_{l=1}^{d_j}f_{kjli})\widehat{u}(j,l)\epsilon_{i} \right|.
\end{eqnarray}
It follows from Lemma \ref{L:L1} that
$$\lim_{n\rightarrow\infty}\sum_{k=1}^{\infty}\sum_{i=1}^{g_k}|(v_{k})_{i}|\left|\sum_{1\leq j\leq n}\sum_{l=1}^{d_j}f_{kjli}\widehat{u}(j,l)\epsilon_{i} \right| =  \sum_{k=1}^{\infty}\sum_{i=1}^{g_k}|(v_{k})_{i}|\left|\sum_{j=1}^{\infty}\sum_{l=1}^{d_j}f_{kjli}\widehat{u}(j,l)\epsilon_{i} \right|<\infty.$$

Then \begin{equation}\label{EQ:4.24} J=\sum_{j=1}^{\infty}\sum_{l=1}^{d_j}|\sum_{k=1}^{\infty}\sum_{i=1}^{g_k}(v_{k})_{i}f_{kjli}| |\widehat{u}(j,l)|<\infty.\end{equation}

So  we proved that if $(f_{kjli})$ satisfies
\begin{itemize}
\item $\sum\limits_{j=1}^{\infty}\sum\limits_{l=1}^{d_j}|f_{kjli}||\widehat{u}(j,l)|<\infty$,
\item $\sum\limits_{k=1}^{\infty}\sum\limits_{i=1}^{g_k}\left|(v_k)_i\right|\left|{\left(\sum\limits_{j=1}^{\infty}f_{kj}\widehat{u}(j)\right)_{i}}\right|<\infty$,
\end{itemize}
 then  for any $u\in{H^{\infty}_E}$ and $v\in\left[G^{\infty}_F\right]^{\wedge}$ we have from \eqref{EQ:4.22} and \eqref{EQ:4.24}, respectively,  that
 \begin{enumerate}[label=(\roman*)]
 \item $\sum\limits_{k=1}^{\infty}\sum\limits_{i=1}^{g_k}|(v_k)_i||f_{kjli}|<\infty$, 
 \item $\sum\limits_{j=1}^{\infty}\sum\limits_{l=1}^{d_j}|\sum\limits_{k=1}^{\infty}\sum\limits_{i=1}^{g_k}(v_{k})_{i}f_{kjli})| |\widehat{u}(j,l)|<\infty.$
 \end{enumerate}
Now recall that for $f: H^{\infty}_E\rightarrow G^{\infty}_F$ we have
$$(f(u))_{k}=\sum\limits_{j=1}^{\infty}\sum\limits_{l=1}^{d_j}f_{kjl}\widehat{u}(j,l),$$ 
for any $u\in{H^{\infty}_E},$ then for any  $v\in\left[G^{\infty}_F\right]^{\wedge}$, the composed mapping  $v\circ f:  H^{\infty}_E\rightarrow \mathbb{C}$ is given by
\begin{eqnarray}(v\circ f)(u)&=&\sum\limits_{k=1}^{\infty}v_k\cdot (f(u))_k=\sum\limits_{k=1}^{\infty}\sum\limits_{i=1}^{g_k}(v_{k})_{i}\left(\sum\limits_{j=1}^{\infty} \sum\limits_{l=1}^{d_j}f_{kjli}\widehat u(j,l)\right)\nonumber\\
&=&\sum\limits_{j=1}^{\infty}\sum\limits_{l=1}^{d_j}\left(\sum\limits_{k=1}^{\infty} \sum\limits_{i=1}^{g_k}(v_{k})_{i}f_{kjli}\right)\widehat u(j,l).\end{eqnarray}

So by (ii) we get that
$$\left|(v\circ f)(u)\right|\leq\sum\limits_{j=1}^{\infty}\sum_{l=1}^{d_j}|\sum\limits_{k=1}^{\infty}\sum\limits_{i=1}^{g_k}(v_{k})_{i}f_{kjli}| |\widehat{u}(j,l)|<\infty.$$
So $\widehat f(v)=(\widehat f (v)_{j l})_{j\in \mathbb N, 1\leq l\leq d_j},$ with $\widehat{f}(v)_{jl}= \sum\limits_{k=1}^{\infty}\sum\limits_{i=1}^{g_k}(v_{k})_{i}f_{kjli}\in \widehat{H^{\infty}_E}$ (from the definition of $\widehat{H^{\infty}_E}$), that is $f$ is sequential.
And then $\langle f(u), v\rangle_{G^{\infty}_F}=\langle u,\widehat f(v)\rangle_{H^{\infty}_E}$  is also true.
%%%%%%%%%%%%%%%

\medskip
Now to prove the converse part we assume that $f: H^{\infty}_E \rightarrow G^{\infty}_F$ is sequential. We have to show that $f$ can be represented as  $f\approx(f_{kjli})_{k,j\in\mathbb{N},1\leq l\leq d_j, 1\leq i\leq g_k}$ and satisfies \eqref{EQ:f1} and \eqref{EQ:f2}.

Define for  $k,i$ where $k\in \mathbb{N}$ and $1\leq i\leq g_k,$  the sequence $u^{ki}=\left(u^{ki}_{j}\right)_{j\in\mathbb{N}}$ such that $u^{ki}_{j}\in \C^{d_j}$ and $u^{ki}_j(l)=\widehat{u^{ki}}({j,l})$, given by 
\[
  u^{ki}_{j}(l)=\widehat{u^{ki}}({j,l})=\begin{cases}
               1,~~~~~j=k, l=i,\\
               0,~~~~~ \textrm{otherwise}.
            \end{cases}
\]

Then  $u^{ki}\in \left[G^{\infty}_F\right]^{\wedge}.$
Now since  $f$ is sequential we have  $u^{ki}\circ f\in\widehat{H^{\infty}_E}$ and $u^{ki}\circ f=\left(\left(u^{ki}\circ f\right)_{j}\right)_{j\in\mathbb{N}},$ where $(u^{ki}\circ f)_{j}\in\C^{d_j}.$  We denote $u^{ki}\circ f=\left(f^{ki}_{j}\right)_{j\in{\N}},$ where $f^{ki}_j=(u^{ki}\circ f)_{j}.$ Then $(f^{ki}_{j})_{j\in\N}\in \widehat{H^{\infty}_E}$ and $f^{ki}_{j}\in\C^{d_j}.$

Then for any $\phi\approx \left(\widehat{\phi}(j)\right)_{j\in\mathbb{N}}\in H^{\infty}_E$ we have
\begin{equation}\sum_{j=1}^{\infty}\sum_{l=1}^{d_j}|f^{ki}_{jl}||\widehat{\phi}(j,l)|<\infty.\end{equation}
 
For $\phi\in H^{\infty}_E$ we can write $f(\phi)\in G^{\infty}_F.$ We can also write $$f(\phi)=\left(\widehat{f(\phi)}(p)\right)_{p\in\N}.$$ So
\begin{eqnarray}\label{EQ: 4.27}
u^{ki}\circ f(\phi)&=&\sum_{j=1}^{\infty}u^{ki}_j\widehat{(f(\phi))}_j\nonumber\\
&=&\sum_{j=1}^{\infty}\sum_{l=1}^{d_j}u^{ki}_{jl}\widehat{(f(\phi))}(j,l)\nonumber\\
&=&(\widehat{f(\phi)})(k,i)~(\textrm{from~ the ~definition~ of~} u^{ki}).\end{eqnarray}

We have  $u^{ki}\circ f=(f^{ki})\in \widehat{H^{\infty}_E},$ so 
\begin{eqnarray}\label{EQ: 4.28}
(u^{ki}\circ f)(\phi)
&=&\sum_{j=1}^{\infty}f^{ki}_{j}\widehat{\phi}(j)\nonumber\\
&=&\sum_{j=1}^{\infty}\sum_{l=1}^{d_j}f^{ki}_{jl}\widehat{\phi}(j,l).
\end{eqnarray}
From \eqref{EQ: 4.27} and \eqref{EQ: 4.28} we have $(\widehat{f(\phi)})(k,i)=\sum\limits_{j=1}^{\infty}\sum\limits_{l=1}^{d_j}f^{ki}_{jl}\widehat{\phi}(j,l).$\\
Hence  $(f(\phi))_{ki}=\sum\limits_{j=1}^{\infty}\sum\limits_{l=1}^{d_j}f^{ki}_{jl}\widehat{\phi}(j,l),~~k\in\mathbb{N},$ and $1\leq i\leq g_k,$ that is  $f$ is represented by the tensor $\left\{(f^{ki}_{jl})\right\}_{k,j\in\mathbb{N}, 1\leq i\leq g_k, 1\leq l\leq d_j}$.\\
If we denote $f^{ki}_{jl}$ by $f^{ki}_{jl}= f_{kjli},$ we can say that $f$ is represented by the tensor $(f_{kjli})_{k,j\in\mathbb{N}, 1\leq l\leq d_j, 1\leq i\leq g_k}.$ 
Also let $v\in\widehat{G^{\infty}_F}.$ Since $f(\phi)\in G^{\infty}_F$ for $\phi\in H^{\infty}_E,$ then from the definition of $\widehat{G^{\infty}_F}$  we have
$$\sum_{k=1}^{\infty}\sum_{i=1}^{g_k}|(v_k)_i|\sum_{j=1}^{\infty}\sum_{l=1}^{d_j}f_{kjli}\widehat{\phi}(j,l)|<\infty.$$
This completes the proof of Theorem \ref{THM:adj}.
\end{proof}

\section{Applications to universality}\label{Universality}

In this section we give an application of the developed analysis to the universality problem. We start with the spaces of smooth functions, and then make some remarks how the same arguments can be extended to the Komatsu classes setting from \cite{DaR2}.

\medskip
First we recall the notations:

\medskip
 Let $\mathcal{H}, \mathcal{G}$ be two Hilbert spaces and let $E$ and $F$ be the operators corresponding to the bases $\{e_{j}\}_{j\in\mathbb{N}},$ $\{h_{k}\}_{k\in\mathbb{N}},$ as in \eqref{ellidef}, where  $d_{j}=\dim X_j$ and $g_k=\dim Y_k,$ and $X_j=\text{span}\{e_{jl}\}_{l=1}^{d_j},$ $Y_{k}=\text{span}\{h_{ki}\}_{i=1}^{g_k}.$ Also $\mathcal{H}=\bigoplus\limits_{j\in\mathbb{N}}X_j$ and $\mathcal{G}=\bigoplus\limits_{k\in\mathbb{N}}Y_k.$\\  
  We denote the spaces of smooth type functions corresponding to the operators $E$ and $F$ in the Hilbert space $\mathcal{H}$  and $\mathcal{G},$ respectively, by $H^{\infty}_E$ and $G^{\infty}_{F}.$ \\

 The main application of  Theorem \ref{THM:adj}  will be in the setting when $X, Y$ are compact manifold without boundary, where  $\mathcal{H}=L^{2}(X)$ and $H^{\infty}_E=C^{\infty}(X)$,  and $\mathcal{G}=L^{2}(Y),$ $G^{\infty}_F=C^{\infty}(Y).$ 
 
 Using Theorem \ref{THM:adj} we prove the universality of the spaces of the smooth type functions, $C^{\infty}(X),$ where we can write
 $$f\in C^{\infty}(X)\iff \forall N\exists C_N~:~ |\widehat{f}(j,k)|\leq C_N(1+\lambda_j)^{-N}~\text{for~all~} j,k.$$ Further details of such spaces can be found in \cite{DR}. In particular, if $E$ is an elliptic pseudo-differential operator of positive order, then this is just the usual space of smooth functions on $X$.
 
\begin{defi}\label{mapdef} Let $E$ be a self-adjoint, positive operator.  A mapping $f: X \rightarrow W$ from the compact manifold $X$ to a sequence space $W$, is said to be a $H^{\infty}_E$-mapping if for any $u\in \widehat{W},$ the composed mapping $u\circ f: X\rightarrow \mathbb{C}$ belongs to $H^{\infty}_{E}$.
\end{defi}
 Next we prove the universality of the spaces of smooth type functions. 
\begin{theorem}\label{UniversalityTH} 
Let $X$ be a compact manifold.
\begin{itemize}
\item[(i)] The delta mapping $\delta: X\rightarrow \widehat{H^{\infty}_{E}}$ defined by 
$$\delta(x)=\delta_x,$$ {and} $$\delta_{x}(\phi)=\langle\delta_{x},\phi\rangle_{H^{\infty}_{E}}= \sum\limits_{j=1}^{\infty}\sum\limits_{l=1}^{d_j}\widehat{\phi}(j,l)(\delta_x)_{jl}=\phi(x), ~~~\text{for~~all}~~\phi\in H^{\infty}_{E},~x\in X,$$ is a $H^{\infty}_{E}$-mapping.
\item[(ii)] If $\tilde{g}: \widehat{H^{\infty}_{E}}\rightarrow G^{\infty}_{F}$ is a sequential linear mapping, then the composed mapping $\tilde{g}\circ\delta:  X \rightarrow G^{\infty}_{F}$ is a $H^{\infty}_{E}$-mapping.
\item[(iii)] For any $H^{\infty}_{E}$-mapping $f: X\rightarrow G^{\infty}_{F},$ there exists a unique sequential linear mapping $\widehat{f}:\widehat{H^{\infty}_{E}}\rightarrow G^{\infty}_{F}$ such that $f=\widehat{f}\circ\delta.$
\end{itemize} 
\end{theorem}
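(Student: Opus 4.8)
The plan is to dispatch (i) and (ii) quickly from the perfectness of $H^{\infty}_E$ (Theorem \ref{P:perfect}) and the $\alpha$-dual identifications of Section \ref{TR1}, and to spend the effort on (iii). Throughout one uses that $X$ is a compact manifold, so $H^{\infty}_E=C^{\infty}(X)$, evaluation functionals make sense, and $H^{s}_E\hookrightarrow C(X)$ for $s$ large; I write $G^{s}_F$ for the Sobolev-type spaces attached to $F$ (so $G^{\infty}_F=\bigcap_sG^{s}_F$ and $\widehat{G^{\infty}_F}=\bigcup_sG^{-s}_F$) and $\{\mu_k\}$ for the weight sequence of $F$, with $\sum_kg_k\mu_k^{-\sigma_0}<\infty$ for some $\sigma_0>0$ as in \eqref{dimineq}. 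For (i), I would first observe $\delta_x\in\widehat{H^{\infty}_E}$, since its entries $(\delta_x)_{jl}=e_{jl}(x)$ grow at most polynomially in $\lambda_j$ while $\widehat\phi(j,l)$ decays faster than any power of $\lambda_j$ for $\phi\in H^{\infty}_E$. Then, using perfectness, an arbitrary $u\in\widehat{\widehat{H^{\infty}_E}}$ is an element of $H^{\infty}_E$ and $u\circ\delta$ is the function $x\mapsto\langle\delta_x,u\rangle=\sum_{j,l}e_{jl}(x)\widehat u(j,l)=u(x)$, whose Fourier coefficients are the sequence $u$; so $u\circ\delta\in H^{\infty}_E$, which is (i). For (ii), if $\tilde g:\widehat{H^{\infty}_E}\to G^{\infty}_F$ is sequential and $v\in\widehat{G^{\infty}_F}$, then $v\circ\tilde g\in\widehat{\widehat{H^{\infty}_E}}=H^{\infty}_E$, so $v\circ(\tilde g\circ\delta)=(v\circ\tilde g)\circ\delta\in H^{\infty}_E$ by (i), and $\tilde g\circ\delta$ is an $H^{\infty}_E$-mapping.

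For the uniqueness in (iii), suppose $\widehat f_1,\widehat f_2$ are sequential with $\widehat f_a\circ\delta=f$ and put $g=\widehat f_1-\widehat f_2$. Then $g$ is sequential and vanishes on $\{\delta_x:x\in X\}$. Since $\phi\in H^{\infty}_E$ with $\phi(x)=0$ for all $x$ forces $\phi=0$, the span of $\{\delta_x\}$ is dense in $\widehat{H^{\infty}_E}$ for $\sigma(\widehat{H^{\infty}_E},H^{\infty}_E)$ (bipolar theorem); as $v\circ g\in\widehat{\widehat{H^{\infty}_E}}=H^{\infty}_E$ is weakly continuous and vanishes on this dense set for every $v\in\widehat{G^{\infty}_F}$, and $\widehat{G^{\infty}_F}$ separates $G^{\infty}_F$, we get $g=0$.

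For the existence in (iii), I would build $\widehat f$ explicitly. Let $v^{ki}\in\widehat{G^{\infty}_F}$ be the sequence equal to $1$ in slot $(k,i)$ and $0$ elsewhere; since $f$ is an $H^{\infty}_E$-mapping, $F_{ki}:=v^{ki}\circ f\in H^{\infty}_E=C^{\infty}(X)$, with $F_{ki}(x)=(f(x))_{ki}$. The crux is the uniform estimate
$$\|F_{ki}\|_{H^{s}_E}\le C_{s,\rho}\,\mu_k^{-\rho}\qquad\text{for all }s\in\Rr\text{ and }\rho>0.$$
I would obtain it from the closed graph theorem: the linear map $T: v\mapsto v\circ f$ carries $\widehat{G^{\infty}_F}=\bigcup_\rho G^{-\rho}_F$ into the Fréchet space $H^{\infty}_E$, and on each Banach space $G^{-\rho}_F$ the graph of $T$ into $H^{s}_E$ is closed because $v\mapsto(v\circ f)(x)=\langle v,f(x)\rangle$ is continuous on $G^{-\rho}_F$ (as $f(x)\in G^{\infty}_F\subset(G^{-\rho}_F)'$) while for $s$ large $H^{s}_E$-convergence implies pointwise convergence on $X$; hence $\|v\circ f\|_{H^{s}_E}\le C_{s,\rho}\|v\|_{G^{-\rho}_F}$, and taking $v=v^{ki}$ gives the estimate since $\|v^{ki}\|_{G^{-\rho}_F}=\mu_k^{-\rho}$. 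Then I define, for $w\in\widehat{H^{\infty}_E}$,
$$(\widehat f(w))_{ki}:=\langle F_{ki},w\rangle_{H^{\infty}_E}=\sum_{j=1}^{\infty}\sum_{l=1}^{d_j}\widehat{F_{ki}}(j,l)\,w_{jl},$$
which converges absolutely. Picking $s$ with $w\in H^{-s}_E$, Cauchy--Schwarz and the estimate give $|(\widehat f(w))_{ki}|\le C_{s,\rho}\mu_k^{-\rho}\|w\|_{H^{-s}_E}$, so $\sum_{k,i}\mu_k^{2t}|(\widehat f(w))_{ki}|^2\le C^{2}\|w\|^{2}\sum_kg_k\mu_k^{2t-2\rho}<\infty$ for every $t$ when $\rho$ is large, i.e.\ $\widehat f(w)\in G^{\infty}_F$. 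The same estimate makes $\sum_{k,i}|(v_k)_i|\sum_{j,l}|\widehat{F_{ki}}(j,l)||w_{jl}|<\infty$ for any $v\in\widehat{G^{\infty}_F}$, so summation may be interchanged to get
$$(v\circ\widehat f)(w)=\sum_{k,i}(v_k)_i\sum_{j,l}\widehat{F_{ki}}(j,l)w_{jl}=\sum_{j,l}w_{jl}\sum_{k,i}(v_k)_i\widehat{F_{ki}}(j,l)=\langle v\circ f,w\rangle_{H^{\infty}_E},$$
the last step because $v\circ f=\sum_{k,i}(v_k)_iF_{ki}$ converges in $C^{\infty}(X)$ (once more by the estimate), so $\widehat{(v\circ f)}(j,l)=\sum_{k,i}(v_k)_i\widehat{F_{ki}}(j,l)$. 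As $v\circ f\in H^{\infty}_E$ by hypothesis, this shows $v\circ\widehat f\in\widehat{\widehat{H^{\infty}_E}}$ for all $v$, i.e.\ $\widehat f$ is sequential, and the same display gives $\langle\widehat f(u),v\rangle=\langle u,v\circ\widehat f\rangle$. Finally $(\widehat f(\delta_x))_{ki}=\sum_{j,l}\widehat{F_{ki}}(j,l)e_{jl}(x)=F_{ki}(x)=(f(x))_{ki}$, so $\widehat f\circ\delta=f$; this $\widehat f$ is the one whose tensor is $(f_{kjli})$ of \eqref{EQ:deff}--\eqref{EQ:deff2} with $u^{ki}=v^{ki}$.

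I expect the single real obstacle to be the uniform estimate $\|F_{ki}\|_{H^{s}_E}\le C_{s,\rho}\mu_k^{-\rho}$: the hypothesis only supplies $F_{ki}\in H^{\infty}_E$ for each fixed $(k,i)$, with no control of the constants as $k\to\infty$, and one must upgrade this to uniformity in $(k,i)$. The closed graph theorem is the natural device; alternatively one can run a Banach--Steinhaus argument over each $G^{-\rho}_F$ for the family of functionals $v\mapsto\langle v\circ f,w\rangle_{H^{\infty}_E}$ with $w$ ranging over the unit ball of $H^{-s}_E$. Everything else --- parts (i), (ii), uniqueness, and the three verifications in the existence step --- is bookkeeping with the pairings, perfectness, and \eqref{dimineq}.
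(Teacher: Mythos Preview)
Your proof is correct. Parts (i) and (ii) are essentially the paper's argument; the paper detours through showing $[H^{\infty}_E]''=H^{\infty}_E$ where you invoke perfectness (Theorem~\ref{P:perfect}) directly, but this is cosmetic.

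Part (iii) is handled by a genuinely different route. The paper defines the transpose $\tilde f:\widehat{G^{\infty}_F}\to H^{\infty}_E$ by $\tilde f(v)=v\circ f$, asserts without verification that $\tilde f$ is sequential, and then appeals to Theorem~\ref{THM:adj} to produce $\widehat f$ as the adjoint of $\tilde f$; uniqueness is obtained by taking the adjoint of $\widehat f$ once more and evaluating at $u=\delta_x$. Your argument is instead constructive: you build $\widehat f$ componentwise from $F_{ki}=v^{ki}\circ f$ and use the closed graph theorem on each Banach pair $G^{-\rho}_F\to H^s_E$ to obtain the uniform decay $\|F_{ki}\|_{H^s_E}\le C_{s,\rho}\,\mu_k^{-\rho}$, which is exactly the quantitative input needed to see that $\widehat f(w)\in G^{\infty}_F$ and that $\widehat f$ is sequential; for uniqueness you use the weak-$*$ density of $\mathrm{span}\{\delta_x\}$ rather than the adjoint. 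What your approach gains is a rigorous treatment of the one point the paper glosses over (the sequentiality of $\tilde f$, i.e.\ that $w\circ\tilde f\in G^{\infty}_F$ for every $w\in\widehat{H^{\infty}_E}$, which is precisely your statement $\widehat f(w)\in G^{\infty}_F$), together with explicit control of the tensor entries. The cost is importing the closed graph theorem, which lies outside the purely sequence-theoretic machinery. The paper's approach, by contrast, keeps (iii) a formal corollary of Theorem~\ref{THM:adj} and stays entirely within the $\alpha$-dual framework, at the price of leaving the sequentiality of $\tilde f$ unjustified.
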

\vspace{.2cm}
\begin{proof}
(i)  ~ Recall  that $\widehat{H^{\infty}_{E}}=\bigcup\limits_{s\in\mathbb{R}}H^{-s}_{E}=[H^{\infty}_{E}]^{\prime}.$ \\
 Let  $v\in [\widehat{H^{\infty}_{E}}]^{\wedge}=H^{\infty}_{E}.$  We define  the composed mapping
$$v\circ\delta_x=\langle\delta_{x},v\rangle_{H^{\infty}_{E}}=\sum\limits_{j=1}^{\infty}\sum\limits_{l=1}^{d_j}v_{jl}(\delta_x)_{jl}=v(x) ~(\text{by~definition~of~}\delta).$$ This is well-defined since $v\in H^{\infty}_E$ and $\delta_{x}\in[H^{\infty}_{E}]^{\prime}= \widehat{H^{\infty}_{E}}.$  Also since $v\in H^{\infty}_{E}$, we see that $v\circ\delta\in H^{\infty}_{E}$ and that implies $\delta$ is a $H^{\infty}_{E}$-mapping.

\medskip
(ii)~ Let $u\in \widehat{G^{\infty}_{F}}=[G^{\infty}_F]^{\prime}.$\\  From the definition of $H^{\infty}_E$-mapping we have to show that $u\circ\tilde{g}\circ\delta\in H^{\infty}_{E}.$\\
Now by given condition $\tilde{g}: [H^{\infty}_{E}]^{\prime}\rightarrow G^{\infty}_{F},$ so we have  $u\circ\tilde{g}\in\left[ {H^{\infty}_{E}}\right]^{\prime \prime}.$ \\
Here we claim that
$\left[ {H^{\infty}_{E}}\right]^{\prime \prime}=H^{\infty}_{E}.$\\
 Note that  $H^{\infty}_{E}\subseteq \left[ {H^{\infty}_{E}}\right]^{\prime \prime}.$\\
  Recall that, $[H^{\infty}_{E}]^{\prime}=\bigcup\limits_{s\in\mathbb{R}}H^{-s}_{E}$ and so for any $s\in\mathbb{R},$ $$ H^{-s}_{E}\subseteq [H^{\infty}_{E}]^{\prime}
\Rightarrow [H^{\infty}_{E}]^{\prime\prime}\subseteq  [H^{-s}_{E}]^{\prime}
\Rightarrow [H^{\infty}_{E}]^{\prime\prime}\subseteq H^{s}_{E}.$$ Since the above is true for any $s\in\mathbb{R},$ we have $ [H^{\infty}_{E}]^{\prime\prime}\subseteq \bigcap\limits_{s\in\mathbb{R}} H^{s}_{E}=H^{\infty}_{E}$ and this gives $[H^{\infty}_{E}]^{\prime\prime}=H^{\infty}_{E}.$

Then $u\circ\tilde{g}\in H^{\infty}_{E}.$ Using same argument as in  the proof of $(i)$ and from the definition of the mapping $\delta$ we have $u\circ\tilde{g}\circ\delta_x=u\circ\tilde{g}(x)$ and $u\circ\tilde{g}\circ\delta$ belongs to $H^{\infty}_{E}.$\\ So from Definition \ref{mapdef}, $\tilde{g}\circ\delta$ is a $H^{\infty}_{E}$-mapping. 

\medskip
(iii)~\underline{Existence of  $\widehat{f}: \widehat{H^{\infty}_E}\rightarrow {G^{\infty}_F}.$}\\

By hypothesis, $f: X\rightarrow{G^{\infty}_F}$ is a  $H^{\infty}_E$-mapping so that for any $v\in  \widehat{G^{\infty}_F},$ $v\circ f\in H^{\infty}_E .$  A sequential linear mapping $\tilde{f}:\widehat{G^{\infty}_F}\rightarrow H^{\infty}_E$ can be defined by $\tilde{f}(v)=v\circ f,$ and $v(f(x))=\langle f(x), v\rangle_{G^{\infty}_F}.$\\
  Hence by Theorem \ref{THM:adj} there is an adjoint mapping, we denote it by $\widehat{f},$ where $\widehat{f}: \widehat{H^{\infty}_E}\rightarrow[\widehat{G^{\infty}_F}]^{\wedge}$ is a sequential mapping. By the definition of the adjoint mapping $\widehat{f}$ we have 
$$\langle u, \tilde{f}(v)\rangle_{H^{\infty}_E}=\langle \widehat{f}\circ u, v\rangle_{G^{\infty}_F},~~ u\in[ H^{\infty}_E]^{\wedge},$$  where $\langle\cdot,\cdot\rangle$ is the bilinear function on $H^{\infty}_E\times \widehat{H^{\infty}_E}$  defined in Section \ref{SEC:seqspaces}. The above can be written as 
$$\langle u, v\circ f\rangle_{H^{\infty}_E}=\langle \widehat{f}\circ u, v\rangle_{G^{\infty}_F},~~ u\in\widehat{H^{\infty}_E}.$$ For $u=\delta_x,$ this gives 
$$\langle \widehat{f}\circ \delta_x, v\rangle_{G^{\infty}_F}=\langle \delta_x, v\circ f\rangle_{{H^{\infty}_E}}=(v\circ f)(x)=v(f(x))=\langle f(x), v\rangle_{G^{\infty}_F},$$ for any $v\in \widehat{G^{\infty}_F}$. This proves $f=\widehat{f}\circ \delta.$\\~\\
\underline{Uniqueness of $\widehat{f}:\widehat{H^{\infty}_E}\rightarrow G^{\infty}_F .$} \\
 
 Suppose $\widehat{f}\circ\delta=f=0.$
We have to show that $\widehat{f}=0$ on $\widehat{H^{\infty}_E}.$ Since $\widehat{f}$ is sequential, there exists $g: \widehat{G^{\infty}_F}\rightarrow H^{\infty}_E$ such that
$$\langle \widehat{f}\circ u,v\rangle_{G^{\infty}_F}=\langle u, g\circ v\rangle_{{H^{\infty}_E}}.$$
Take $u=\delta_x\in \widehat{H^{\infty}_E},$ then
$$\langle \widehat{f}\circ\delta_x, v\rangle_{G^{\infty}_F} =\langle \delta_x, g\circ v\rangle_{{H^{\infty}_E}} =g(v(x))=0$$ for any $v\in \widehat{G^{\infty}_F},$ that is, $g=0$ on $\widehat{G^{\infty}_F}.$ From $\langle \widehat{f}\circ u, v\rangle_{G^{\infty}_F} =0$ for any $v\in \widehat{G^{\infty}_F},$ we get $ \widehat{f}\circ u=0$ for any $u\in \widehat{H^{\infty}_E},$ that is $\widehat{f}=0$ on $\widehat{H^{\infty}_E}.$
\end{proof}

\subsection{Extension to Komatsu classes}\hfill\\

Here we briefly outline how the analysis above can be extended to the setting of Komatsu classes from \cite{DaR2, DaR3}.

\begin{remark} 
In another work {\rm (\cite{DaR3})} we studied the Komatsu classes of ultra-differen\-tiable functions $\Gamma_{\{M_k\}}(X)$ on a compact manifold $X,$ where $M_{\{k\}}$ be a sequence of positive numbers such that
\begin{enumerate}
\item $M_0=1,$
\item $M_{k+1}\leq AH^{k}M_k, ~k=0,1,2,...,$
\item $M_{2k}\leq AH^{k}\min\limits_{0\leq q\leq k}M_qM_{k-q}, k=0,1,2,...,$ for some $A,H>0.$
\end{enumerate}
 In \cite{DaR3} we have characterised the dual spaces of these Komatsu classes and have shown that these spaces are perfect spaces, i.e, these spaces coincide with their second dual spaces. Furthermore, in \cite[Theorem 4.7]{DaR3} we proved the following theorem for Komatsu classes of functions on a compact manifold $X$:
 \end{remark}
 
\begin{theorem}[Adjointness Theorem] Let $\{M_k\}$and $\{N_k\}$ satisfy conditions $(M.0)-(M.3)$. A linear mapping $f:\Gamma_{\{M_k\}}(X)\rightarrow\Gamma_{\{N_k\}}(X)$ is sequential if and only if $f$ is represented by an infinite tensor $(f_{kjli}),$ $k,j\in\mathbb{N}_{0},$ $1\leq l\leq d_j$ and $1\leq l\leq d_k$ such that for any $u\in \Gamma_{\{M_k\}}(X)$ and $v\in \widehat{\Gamma_{\{N_k\}}(X)}$ we have
$$\sum\limits_{j=0}^{\infty}\sum\limits_{l=1}^{d_j}\left|f_{kjli}\right|\left|\widehat{u}(j,l)\right|<\infty, ~~for ~~all ~~k\in\mathbb{N}_{0},~i=1,2,...,d_k,$$
and
$$\sum\limits_{k=0}^{\infty}\sum\limits_{i=1}^{d_k}\left|(v_k)_i\right|\left|\left(\sum\limits_{j=0}^{\infty}f_{kj}\hat{u}(j)\right)_i\right|<\infty.$$
Furthermore, the adjoint mapping $\widehat{f}: \widehat{\Gamma_{\{N_k\}}(X)}\rightarrow \widehat{\Gamma_{\{M_k\}}(X)}$ defined by the formula $\widehat{f}(v)=v\circ f$ is sequential, and the transposed matrix $(f_{kj})^{t}$ represents $\widehat{f},$ with $f$ and $\widehat{f}$ related by $\langle f(u), v\rangle=\langle u,\widehat{f}(v)\rangle.$
\end{theorem}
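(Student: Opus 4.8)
The plan is to transcribe, almost line for line, the proof of Theorem~\ref{THM:adj} into the Komatsu setting, exploiting the three structural facts about $H^\infty_E$ that were actually used there: that $\Gamma_{\{M_k\}}(X)$ is a \emph{perfect} sequence space, that its $\alpha$-dual $\widehat{\Gamma_{\{M_k\}}(X)}$ is exactly the coefficient space of the corresponding ultradistributions, and that both spaces are \emph{normal} (stable under replacing the Fourier coefficients by their absolute values). All three are available from \cite{DaR2,DaR3}. The first thing I would establish is the Komatsu analogue of Lemma~\ref{L:L1}: for a tensor $(f_{kjli})$ satisfying the two displayed summability conditions and for $u\in\Gamma_{\{M_k\}}(X)$, $v\in\widehat{\Gamma_{\{N_k\}}(X)}$, truncating $u$ to $u^{n}$ (coefficients zeroed for $j>n$) gives $\langle u-u^{n},w\rangle\to0$ for every $w\in\widehat{\Gamma_{\{M_k\}}(X)}$; a Fubini rearrangement of the quadruple sum defining $\langle f(u),v\rangle$, legitimate thanks to the absolute summability furnished by the two hypotheses, identifies it with $\langle u,v\circ f\rangle$ and shows $v\circ f\in\widehat{\Gamma_{\{M_k\}}(X)}$; perfectness then yields $\langle f(u^n),v\rangle\to\langle f(u),v\rangle$, and finally normality — passing to absolute-value sequences and inserting $\pm1$ phase factors exactly as in the proof of Lemma~\ref{L:L1} — upgrades this to the limit identity with absolute values.

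\textbf{Tensor representation $\Rightarrow$ sequential.} Given $(f_{kjli})$ satisfying \eqref{EQ:f1}--\eqref{EQ:f2}, I would first evaluate on the unit sequence $u_1$ whose only nonzero Fourier coefficient is $\widehat{u_1}(j,l)=1$: since $u_1\in\Gamma_{\{M_k\}}(X)$ we have $f(u_1)\in\Gamma_{\{N_k\}}(X)$ with $(f(u_1))_k=f_{kjl}$, and pairing against $v\in\widehat{\Gamma_{\{N_k\}}(X)}$ gives $\sum_{k,i}|(v_k)_i||f_{kjli}|<\infty$ for every $j,l$ — the analogue of \eqref{EQ:4.22}. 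For general $u$, I would insert bounded phases $\epsilon_i$ so that $|\sum_{k,i}(v_k)_i f_{kjli}\widehat u(j,l)|=\sum_{k,i}(v_k)_i f_{kjli}\widehat u(j,l)\epsilon_i$, sum over $j\le n$, and apply the Komatsu version of Lemma~\ref{L:L1} to pass $n\to\infty$, obtaining $\sum_{j,l}|\sum_{k,i}(v_k)_i f_{kjli}||\widehat u(j,l)|<\infty$ — the analogue of \eqref{EQ:4.24}. Setting $(v\circ f)_{jl}:=\sum_{k,i}(v_k)_i f_{kjli}$ (the inner sum converging absolutely by the first estimate), these two facts say precisely that $v\circ f\in\widehat{\Gamma_{\{M_k\}}(X)}$, i.e. $f$ is sequential; the same Fubini computation gives $\langle f(u),v\rangle=\langle u,\widehat f(v)\rangle$ with $\widehat f(v)=v\circ f$, represented by the transposed tensor.

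\textbf{Sequential $\Rightarrow$ tensor representation.} Conversely, assuming $f$ sequential, for each $k\in\mathbb{N}_0$ and $1\le i\le d_k$ I would take the coordinate sequence $u^{ki}\in\widehat{\Gamma_{\{N_k\}}(X)}$ with $u^{ki}_j(l)=1$ if $(j,l)=(k,i)$ and $0$ otherwise; sequentiality gives $u^{ki}\circ f\in\widehat{\Gamma_{\{M_k\}}(X)}$, and I set $f_{kjli}:=(u^{ki}\circ f)_j(l)$. Evaluating $u^{ki}\circ f$ on $\phi\in\Gamma_{\{M_k\}}(X)$ in the two ways — once via the definition of $u^{ki}$, giving $(\widehat{f(\phi)})(k,i)$, once via $u^{ki}\circ f\in\widehat{\Gamma_{\{M_k\}}(X)}$ — yields $(\widehat{f(\phi)})(k,i)=\sum_{j,l}f_{kjli}\widehat\phi(j,l)$, so $f$ is represented by $(f_{kjli})$. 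Then \eqref{EQ:f1} is the membership $u^{ki}\circ f\in\widehat{\Gamma_{\{M_k\}}(X)}$ read against $\phi$, and \eqref{EQ:f2} follows from $f(\phi)\in\Gamma_{\{N_k\}}(X)$ and $v\in\widehat{\Gamma_{\{N_k\}}(X)}$. The same coordinate-sequence argument applied to $\widehat f(v)=v\circ f$ shows it is sequential and represented by $(f_{kj})^t$, and the pairing identity was already obtained.

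\textbf{Main obstacle.} The delicate point is that \eqref{EQ:f2} puts the absolute value \emph{outside} the $j$-summation, so it does not by itself licence the termwise rearrangement of $\langle f(u),v\rangle$; the $\epsilon$-phase trick together with the Komatsu analogue of Lemma~\ref{L:L1} is exactly the device that bridges this, and — as noted in the remark following Lemma~\ref{L:L1} — its proof uses only perfectness and normality of the Komatsu spaces, not sequentiality of $f$, so deducing $v\circ f\in\widehat{\Gamma_{\{M_k\}}(X)}$ is not circular. A secondary point requiring attention is the Roumieu case, where $\Gamma_{\{M_k\}}(X)$ is an inductive limit and perfectness is less automatic; but that perfectness, on which the whole argument rests, is already established in \cite{DaR3}, so nothing new is needed there.
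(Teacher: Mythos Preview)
Your proposal is correct and follows exactly the route the paper itself takes: the paper does not reprove this Komatsu-class Adjointness Theorem here (it is cited from \cite[Theorem~4.7]{DaR3}), but your adaptation of the proof of Theorem~\ref{THM:adj} to the Komatsu setting---relying on perfectness and normality of $\Gamma_{\{M_k\}}(X)$ from \cite{DaR3}, and in particular using the Lemma~\ref{L:L1} argument that avoids sequentiality---is precisely what the remark following Lemma~\ref{L:L1} says yields an improved version of the original \cite{DaR3} proof.
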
 

The above theorem described the tensor structure of sequential mappings on spaces of Fourier coefficients and characterised their adjoint mappings. 
Now in particular the considered classes include spaces  of analytic and Gevrey functions (which are perfect spaces too), as well as spaces of ultradistributions, yielding tensor representations for linear mappings between these spaces on compact manifolds. Now using \cite[Theorem 4.7]{DaR3} and the same techniques used in this paper to prove the  universality of smooth functions in Theorem \ref{UniversalityTH}, on compact manifolds in Section \ref{Universality}, one also obtains the universality of the Gevrey classes of ultradifferentiable functions on compact groups (from \cite{DaR1}) and Komatsu classes of functions in compact manifolds. As the proof would be a repetition of those arguments, we omit it here.

\end{document}